\documentclass{article}

\usepackage{RegretLBMPC}

\usepackage[utf8]{inputenc} 
\usepackage[T1]{fontenc}   
\usepackage[pdfencoding=unicode, psdextra]{hyperref}      
\usepackage{url}            
\usepackage{booktabs}      
\usepackage{nicefrac}       
\usepackage{microtype}      
\usepackage{graphicx}
\usepackage{doi}
\usepackage{authblk}
\usepackage{cite}
\bibliographystyle{IEEEtran}

\usepackage{amsmath,amssymb,amsfonts}
\usepackage{algorithm}
\usepackage{algpseudocode}
\newtheorem{theorem}{Theorem}
\newtheorem{defn}{Definition}
\newtheorem{assm}{Assumption}
\newtheorem{remark}{Remark}
\newtheorem{prop}{Proposition}

\newtheorem{lem}{Lemma}

\usepackage{svg}
\usepackage{subcaption}
\usepackage[pdfencoding=unicode, psdextra]{hyperref}
\usepackage{bookmark}
\usepackage{float, csquotes}

\DeclareMathOperator*{\argmin}{arg\,min}
\usepackage{cuted}
\usepackage{breqn}

\usepackage{enumitem}
\setlength\stripsep{3pt plus 1pt minus 1pt}
\usepackage{multicol}
\usepackage{adjustbox}
\usepackage{titlesec}
\titlelabel{\thetitle.\quad}




\title{Regret Analysis of Learning-Based MPC with Partially-Unknown Cost Function}

\hypersetup{
pdftitle={Regret Analysis of Learning-Based MPC with Partially-Unknown Cost Function},
pdfsubject={math.OC, cs.LG},
pdfauthor={Ilgin Dogan, Zuo-Jun Max Shen, Anil Aswani},
}

\begin{document}

\author[1]{\large Ilgin Dogan}
\author[1, 2]{Zuo-Jun Max Shen}
\author[1]{Anil Aswani\thanks{This material is based upon work supported by the National Science Foundation under Grant CMMI-1847666.} }
\affil[1]{\small Industrial Engineering and Operations Research, University of California, Berkeley 94720, \{ilgindogan,maxshen,aaswani\}@berkeley.edu}
\affil[2]{\small Faculty of Engineering and Faculty of Business and Economics, University of Hong Kong, China, maxshen@hku.hk}

\maketitle

\begin{abstract}
The exploration/exploitation trade-off is an inherent challenge in data-driven adaptive control. Though this trade-off has been studied for multi-armed bandits (MAB's) and reinforcement learning for linear systems; it is less well-studied for learning-based control of nonlinear systems. A significant theoretical challenge in the nonlinear setting is that there is no explicit characterization of an optimal controller for a given set of cost and system parameters. We propose the use of a finite-horizon oracle controller with full knowledge of parameters as a reasonable surrogate to optimal controller. This allows us to develop policies in the context of learning-based MPC and MAB's and conduct a control-theoretic analysis using techniques from MPC- and optimization-theory to show these policies achieve low regret with respect to this finite-horizon oracle. Our simulations exhibit the low regret of our policy on a heating, ventilation, and air-conditioning model with partially-unknown cost function. \\
	
\textbf{\textit{Keywords}} Non-myopic Exploitation, Learning-Based Control, Model Predictive Control, Restless Bandits 
\end{abstract}

\noindent\makebox[\linewidth]{\rule{\textwidth}{1pt}} 

\section{Introduction}
\label{sec:introduction}
Reinforcement learning (RL) research \cite{AbbasiYadkori2019ModelFreeLQ, Chen2019ReinforcementLA, Agarwal2020BoostingFC} focuses on regret analysis for primarily unconstrained, linear systems. On the other hand, adaptive model predictive control (MPC), including learning-based MPC (LBMPC), seeks to ensure constraint satisfaction in the presence of models that are updated as more data becomes available \cite{Negenborn2004ExperiencebasedMP, aswani2013provably, Karnchanachari2020PracticalRL, Gros2020ReinforcementLF}.  The relationship between MPC and RL has not yet been fully explored. 

Our paper aims to better connect these two areas. We make two main contributions: First, we discuss how comparing finite-horizon policies with different horizon lengths leads to ambiguous regret notions in evaluation of learning-based control policies. Thus we propose a regret notion that compares a finite-horizon learning-based policy with a finite-horizon oracle controller as the benchmark. Second, we bound this regret notion for a class of learning-based control policies for which we prove constraint satisfaction. An important aspect of our regret analysis is that we have to consider the stability of our policy when bounding the regret. In this sense, our analysis draws a connection between stability of the nonlinear control system and regret performance of the learning policy.
\subsection{Partially-Unknown Cost Function} \label{sec:motivation}
MPC usually assumes the system dynamics and a cost function are exactly known. However, these may be partially-unknown in real-world systems that motivate our setup.
\subsubsection{Heating, Ventilation, Air-Conditioning (HVAC) Systems}\label{subsec:hvac}
Since HVAC uses a large part of total building energy, improving HVAC energy-efficiency using MPC has been studied \cite{aswani2011reducing, afram2014theory, ostadijafari2019linear, fang2020identification}. However, past works typically assume perfect knowledge of a cost function that characterizes the trade-off between energy-efficiency and occupant comfort. In practice, the quantity of trade-off is different for each occupant and is \emph{a priori} unknown to the controller. It makes sense to learn an ideal trade-off from occupant-reported data \cite{aswani2018inverse} and then adapt the MPC operation in response, which is an example of MPC with a partially-unknown cost function. 
\subsubsection{Clinical-Inventory Management}
Inventory management in hospitals involves periodically restocking drugs and medical supplies, and MPC for inventory management \cite{velarde2014application, schildbach2016scenario, maestre2018application, garcia2020data} is powerful as it naturally captures the dynamics of consuming and purchasing drugs and supplies. Although past work typically assumes that consumption dynamics are completely characterized, it is not realistic for the demand in hospitals due to unforeseeable medical emergencies. It then makes sense from a practical standpoint to learn about the demand from such events and then adapt the MPC operation in response, which is an example of MPC with learning for the dynamics.
\subsection{Exploration/Exploitation Trade-Off}
A challenge in LBMPC is to jointly optimize the control to minimize a cost function and to steer the system to get more information about unknown system or cost parameters \cite{Mesbah2018StochasticMP}. This \textit{exploration}/\textit{exploitation} trade-off and has been formally studied in the setting of MAB's \cite{thompson1933likelihood,agrawal2013thompson,mintz2017non}, RL for finite Markov chains \cite{heger1994consideration, biyik2019efficient, budd2020markov} and for linear systems \cite{735224, KiumarsiKhomartash2017HCO, pmlr-v97-cohen19b, pmlr-v119-simchowitz20a}.
Most work on MAB's assumes (weak-)stationarity because computing the optimal policy with non-stationary is PSPACE-hard \cite{papadimitriou1999complexity}. In RL of control systems, past work on nonlinear systems is limited \cite{koller2018learning, gros2019data, kakade2020information, wabersich2020performance, fan2020, boffi2021regret} because the optimal controller for linear systems with a quadratic cost is completely characterized by the Algebraic Ricatti Equation: This allows one to convert the RL problem into simply a parameter estimation problem. However, extending these ideas to nonlinear systems is nontrivial as there is no such simple characterization of the optimal controller, and so alternative approaches are needed. We design a learning-based controller for nonlinear and non-stationary systems where the policy explores to improve the estimation methodology embedded in the learning mechanism.
\subsection{Outline}
Sect. \ref{sec:prelim} covers preliminaries. Sect. \ref{sec:problemform} defines our setup and and proves safety properties for a class of control policies. Sect. \ref{sec:Nstepregret} introduces \textit{$N$-step dynamic regret}, and Sect. \ref{sec:estimate} and \ref{sec:approach} present a finite sample analysis for the parameter estimation and regret analysis for the \textit{non-myopic $\epsilon$-greedy algorithm}. Lastly, numerical experiments are done in Sect. \ref{sec:exper}.
\section{Preliminaries} \label{sec:prelim}
A polytope $\mathcal{U}$ in $\mathbb{R}^n$ can be represented as intersection of a set of half-spaces \cite{borelli2009constrained}: $\mathcal{U} = \{x: P_i x \leq q_i, i = 1, \ldots, d\}$, $P_i \in \mathbb{R}^{d \times n}$, $q_i \in \mathbb{R}^d$. Let $\mathcal{U}, \mathcal{V}$ be two sets. The linear transformation of $\mathcal{U}$ by a matrix $\mathcal{R}$ is $\mathcal{R}\mathcal{U} = \{\mathcal{R}u:u \in \mathcal{U}\}$. Their Minkowski sum \cite{Schneider1993ConvexBT} is defined as $\mathcal{U} \oplus \mathcal{V} = \{u + v : u \in \mathcal{U}; v \in \mathcal{V}\}$ and Pontryagin set difference \cite{Kolmanovsky1998TheoryAC} is defined as $\mathcal{U} \ominus \mathcal{V} = \{u: u + \mathcal{V} \subseteq \mathcal{U}\}$. Note $\mathcal{R}(\mathcal{U} \ominus \mathcal{V}) \subseteq \mathcal{R}\mathcal{U} \ominus \mathcal{R}\mathcal{V}$ and $(\mathcal{U} \ominus \mathcal{V}) \oplus \mathcal{V} \subseteq \mathcal{U}$.
\section{Problem Formulation}\label{sec:problemform}

Let $x_t\in\mathbb{R}^n$ be states and $u_t\in\mathbb{R}^q$ be inputs. We assume $x_t\in \mathcal{X}$ and $u_t\in\mathcal{U}$ are constrained by (compact) polytopes $\mathcal{X},\mathcal{U}$. The true system dynamics are $x_{t+1} = f(x_t, u_t, \theta_0) = Ax_t + Bu_t + g(x_t,u_t,\theta_0)$, where $A\in\mathbb{R}^{n\times n}$, $B\in\mathbb{R}^{n\times q}$, $\theta_0\in\Theta$ for some compact set $\Theta\subseteq\mathbb{R}^p$, and the nonlinear function $g(\cdot,\cdot,\theta) : \mathbb{R}^n \times \mathbb{R}^q \rightarrow \mathbb{R}^n$ is parameterized by $\theta\in\Theta$. We assume $\big\{g(x,u,\theta_0) : x\in\mathcal{X}, u\in\mathcal{U}\big\} \subseteq \mathcal{W}$ for some bounded polytope $\mathcal{W}$, and $A,B,g,\mathcal{W},\Theta$ are known but $\theta_0$ is not known to the controller. Define $w_t = g(x_t,u_t,\theta_0)$, and note $w_t\in\mathcal{W}$ by assumption. The intuition is we have a nominal linear model and a partially-unknown, nonlinear correction.

At each time $t$, the controller receives a stochastic reward $r_t$ from distribution $\mathbb{P}_{x_t, u_t,\theta_0}$ with probability density function $p(r|x_t,u_t,\theta_0)$ and expectation $\mathbb{E}\,r_t = h(x_t,u_t,\theta_0)$. We assume $h$ is parametrically unknown ($\theta_0$ is unknown). This setup can handle stochastic costs $c_t$ (as opposed to rewards) by setting $r_t = -c_t$. We standardize our notation for rewards. 

The control problem is to sequentially choose inputs to maximize expected total reward at the end of a finite time horizon $\mathcal{T}= \{0,\ldots,T\}$. At time $t$, the controller has access to past rewards, inputs, and states. Hence, any policy $u_t = \Lambda_t(\mathcal{F}_t)$ will be a sequence (with respect to $t$) of functions of
\begin{equation}
\label{eqn:ft}
    \mathcal{F}_t = \{r_0,\ldots,r_{t-1}, u_0,\ldots,u_{t-1}, x_0,\ldots,x_t\}.
\end{equation}
We distinguish between different policies by using superscripts for the sequence of functions $\Lambda_t$ characterizing the policy.
\subsection{Learning-Based MPC Formulation}
LBMPC uses two models: a learned model to enhance performance and a nominal model to provide robustness \cite{aswani2013provably}. Because $A,B$ are known in our setup, the controller uses as its nominal model $\bar{x}_{t+k+1|t} = A\bar{x}_{t+k|t} + Bu_{t+k|t}$, where $\bar{x}\in\mathbb{R}^n$ is system state of the nominal model. The ``$|t$'' notation denotes the initial condition is taken to be $\bar{x}_{t|t} = x_t$, where $x_t$ is the true state at time $t$. Because $g(\cdot,\cdot,\theta)$ is also known, the controller uses as its learned model $\tilde{x}_{t+k+1|t} = A\tilde{x}_{t+k|t} + Bu_{t+k|t} + g(\tilde{x}_{t+k|t}, u_{t+k|t}, \widehat{\theta}_t)$, where $\tilde{x}$ is the system state of the learned model and $\widehat{\theta}_t$ is the controller's estimate of $\theta_0$ at time $t$. Here, LBMPC learns the true dynamics by updating its estimate of $\theta_0$ as more state measurements become available.

We must first discuss the terminal set used for the MPC. Assuming that $(A,B)$ is stabilizable, there exists a constant state-feedback matrix $K\in\mathbb{R}^{q\times n}$ such that $(A+BK)$ is Schur stable. We assume $\Omega \subseteq \mathcal{X}$ is a maximal output admissible disturbance invariant set \cite{Kolmanovsky1998TheoryAC} meaning that for some stabilizing $K$ it satisfies: a) $\Omega \subseteq \{\overline{x} : \overline{x} \in \mathcal{X} :  K\overline{x} \in \mathcal{U}\}$ (\emph{constraint satisfaction}) and b) $(A + BK)\Omega \oplus \mathcal{W} \subseteq \Omega$ (\emph{disturbance invariance}). The intuition is that $\Omega$ is a set of states satisfying the constraints $\mathcal{X}$ for which there exists a feasible action keeping the true state within $\Omega$ despite the uncertainty of the nominal model. Several algorithms \cite{Kolmanovsky1998TheoryAC, limon2009input, rakovic2010parameterized, wang2021computation} can compute this set, and so we assume $\Omega$ is available to the controller. 

With the set $\Omega$, we consider an (simplified) LBMPC variant that maximizes the expected $N$-step reward. Our results can be generalized straightforwardly to the full formulation \cite{aswani2013provably}, but we do not consider this as it adds substantial notational complexity that hinders showcasing the stochastic aspects of our setting. The LBMPC formulation of a finite-horizon $N$ is
\begin{align}
    V_N(x_t, \theta, t) = &\max \textstyle\sum_{k=0}^{N} h(\tilde{x}_{t+k|t}, u_{t+k|t}, \theta) \allowdisplaybreaks \notag\\ 
    \mathrm{s.t.}\ & \bar{x}_{t+k+1|t} = A\bar{x}_{t+k|t} + Bu_{t+k|t} && \textstyle  k \in \langle N-1 \rangle \allowdisplaybreaks \notag \\
    & \tilde{x}_{t+k+1|t} = A\tilde{x}_{t+k|t} + Bu_{t+k|t} + g(\tilde{x}_{t+k|t}, u_{t+k|t}, \theta) && \textstyle k \in \langle N-1 \rangle \allowdisplaybreaks \notag \\
    & \bar{x}_{t+k|t} \in \mathcal{X} && \textstyle k \in [N] \allowdisplaybreaks \notag \\
    & u_{t+k|t} \in \mathcal{U} && \textstyle k \in \langle N\rangle \notag \allowdisplaybreaks \\
    & \bar{x}_{t+1|t} \in \Omega \ominus \mathcal{W}, \ \bar{x}_{t|t} = \tilde{x}_{t|t} = x_t  \allowdisplaybreaks \label{eqn:lbmpc}
\end{align}
where $\langle k \rangle = \{0,\ldots,k\}$ and $[k] = \{1,\ldots,k\}$. The difference between this simplified variant and the full formulation is that here we apply the invariant set $\Omega$ at the first time step, an idea previously used in \cite{6315483}, whereas the full formulation uses a robust tube framework to apply $\Omega$ at the $N$-th time point. Our results apply to the above LBMPC formulation and may generalize to the similar variants, but it is unclear if they would generalize to other LBMPC forms without further study.
\subsection{Safety of Learning-Based MPC Variant}

Because applying the invariant set to the first time point in an MPC formulation is nonstandard, we first formally prove that this LBMPC variant ensures recursive properties of robust constraint satisfaction and robust feasibility.

\begin{theorem} \label{thm:safety}
Suppose $\{u_{t|t},\ldots,u_{t+N|t}\}$ are feasible for $V_N(x_t,\theta,t)$ for any $\theta$. If $\Omega$ is a maximal output admissible disturbance invariant set, then choosing $u_t = u_{t|t}$ ensures that we have: a) $x_{t+1} \in \mathcal{X}$ (robust constraint satisfaction) and b) there exist values $\{u_{t+1|t+1},\ldots,u_{t+N|t+1}\}$ that are feasible for $V_N(x_{t+1},\theta',t+1)$ for any $\theta'$ (robust feasibility).
\end{theorem}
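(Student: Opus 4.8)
The plan is to obtain part~(a) directly from the set algebra attached to the applied input, and then to bootstrap part~(b) from part~(a) by exhibiting the usual ``shift-and-append'' candidate solution, where the appended tail is generated by the stabilizing feedback $K$ acting on the nominal model.

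For part~(a), the first step is to note that since $\bar{x}_{t|t}=x_t$ and $u_t=u_{t|t}$, one applied step of the true dynamics \eqref{eqn:statedynamics} gives $x_{t+1}=Ax_t+Bu_t+w_t=\bar{x}_{t+1|t}+w_t$ with $w_t=g(x_t,u_t,\theta_0)\in\mathcal{W}$ by assumption. Feasibility of $\{u_{t|t},\dots,u_{t+N|t}\}$ forces $\bar{x}_{t+1|t}\in\Omega\ominus\mathcal{W}$, so the preliminary inclusion $(\Omega\ominus\mathcal{W})\oplus\mathcal{W}\subseteq\Omega$ yields $x_{t+1}\in\Omega\subseteq\mathcal{X}$. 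This proves robust constraint satisfaction and, more importantly for part~(b), the stronger fact $x_{t+1}\in\Omega$.

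For part~(b), the key structural observation is that in \eqref{eqn:lbmpc} the learned-model trajectory $\tilde{x}_{t+k|t}$ carries no inequality constraint --- it appears only in its own defining recursion and in the objective --- so the feasible region of $V_N(x_{t+1},\theta',t+1)$ does not depend on $\theta'$, and it suffices to produce a single input sequence obeying the nominal-model constraints. Using $x_{t+1}\in\Omega$ from part~(a), I would take $u_{t+1+k|t+1}=K\bar{x}_{t+1+k|t+1}$ for $k\in\langle N\rangle$ with $\bar{x}_{t+1|t+1}=x_{t+1}$, so that $\bar{x}_{t+1+k|t+1}=(A+BK)^k x_{t+1}$. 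Disturbance invariance $(A+BK)\Omega\oplus\mathcal{W}\subseteq\Omega$ gives both $(A+BK)\Omega\subseteq\Omega\ominus\mathcal{W}$ and (under the standard convention $0\in\mathcal{W}$) $(A+BK)\Omega\subseteq\Omega$, hence $\bar{x}_{t+2|t+1}\in\Omega\ominus\mathcal{W}$ and $\bar{x}_{t+1+k|t+1}\in\Omega$ for every $k\in\langle N\rangle$; the constraint-satisfaction property $\Omega\subseteq\{\bar{x}\in\mathcal{X}:K\bar{x}\in\mathcal{U}\}$ then delivers $\bar{x}_{t+1+k|t+1}\in\mathcal{X}$ for $k\in[N]$ and $u_{t+1+k|t+1}\in\mathcal{U}$ for $k\in\langle N\rangle$. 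Setting $\tilde{x}_{t+1|t+1}=x_{t+1}$ and letting it evolve freely completes a feasible point of $V_N(x_{t+1},\theta',t+1)$ for every $\theta'$.

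The argument is essentially bookkeeping with Minkowski sums and Pontryagin differences, so I do not expect a genuine obstacle; the points that need care are (i) that the tail must stay inside $\Omega$ for \emph{all} prediction indices up to $N$, not merely one step, which is exactly where the chain $(A+BK)\Omega\subseteq\Omega\ominus\mathcal{W}\subseteq\Omega$ is invoked, (ii) the nonstandard placement of $\Omega$ at the first prediction step rather than the terminal step, which is what turns the first-step requirement into $\Omega\ominus\mathcal{W}$ instead of $\Omega$, and (iii) stating explicitly why feasibility is independent of $\theta'$, namely that $\tilde{x}$ never enters an inequality constraint.
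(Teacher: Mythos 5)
Your proposal is correct and follows essentially the same route as the paper: part (a) via $x_{t+1}=\bar{x}_{t+1|t}+w_t\in(\Omega\ominus\mathcal{W})\oplus\mathcal{W}\subseteq\Omega\subseteq\mathcal{X}$, and part (b) via the feedback tail $u_{t+1+k|t+1}=K\bar{x}_{t+1+k|t+1}$ together with the chain $(A+BK)\Omega\subseteq((A+BK)\Omega\oplus\mathcal{W})\ominus\mathcal{W}\subseteq\Omega\ominus\mathcal{W}\subseteq\Omega$. Your explicit remark that feasibility is independent of $\theta'$ because $\tilde{x}$ never enters an inequality constraint is a useful clarification the paper leaves implicit, but it does not change the argument.
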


\begin{proof}
Since $\{u_{t|t},\ldots,u_{t+N|t}\}$ are feasible for $V_N(x_t,\theta,t)$, then $\overline{x}_{t+1|t} = Ax_t + Bu_{t|t} \in \Omega \ominus \mathcal{W}$ by (\ref{eqn:lbmpc}). By relating the true dynamics to the nominal model, the true next state is $x_{t+1} = \overline{x}_{t+1|t} + w_t$ for some $w_t \in \mathcal{W}$. This means $x_{t+1} \in (\Omega \ominus \mathcal{W}) \oplus \mathcal{W} \subseteq \Omega \subseteq \mathcal{X}$ where the last set inclusion follows from the constraint satisfaction property in the definition of $\Omega$. 

By the definition (\ref{eqn:lbmpc}) of $V_N(x_{t+1},\theta',t+1)$, we have that $\bar{x}_{t+1|t+1} = x_{t+1}$. However, we just showed that $x_{t+1}\in\Omega$. Hence $\bar{x}_{t+1|t+1} \in \Omega$. Now set $u_{t+1|t+1} = Kx_{t+1}$, and note that the constraint satisfaction property of $\Omega$ means $u_{t+1|t+1} \in\mathcal{U}$. Since $\overline{x}_{t+2|t+1} = A\overline{x}_{t+1|t+1} + Bu_{t+1|t+1} = (A + BK)\overline{x}_{t+1|t+1}$, we have $\overline{x}_{t+2|t+1} \in (A+BK)\Omega \subseteq ((A+BK)\Omega \oplus \mathcal{W})\ominus\mathcal{W} \subseteq  \Omega\ominus\mathcal{W}$ where the last set inclusion follows by the disturbance invariance property of $\Omega$. So $\bar{x}_{t+2|t+1} \in \Omega\ominus\mathcal{W} \subseteq\Omega\subseteq \mathcal{X}$ by the constraint satisfaction property of $\Omega$. We can sequentially repeat this argument with $u_{t+k+1|t+1} = Kx_{t+k+1|t+1}$ to show this choice results in $u_{t+k+1|t+1}\in \mathcal{U}$ and $x_{t+k+1|t+1}\in\mathcal{X}$ for $k \in [N-1]$. Thus $\{u_{t+1|t+1},\ldots,u_{t+N|t+1}\}$ are feasible for $V_N(x_{t+1},\theta',t+1)$.
\end{proof}
\begin{remark}
An important feature of the above result is that there is no required relationship between the $\theta$ and $\theta'$. Since estimates of the $\theta$ are updated through learning, this shows that the safety properties of this LBMPC variant are decoupled from the design of the learning-process. 
\end{remark}
\subsection{Technical Assumptions}
Our learning-based control problem is well-posed under certain regularity assumptions described below.
\begin{assm} \label{asm:assmpt1}
The rewards $r_{t}$ are conditionally independent given $\theta_0$ and $x_0$, or equivalently, given $\theta_0$ and the complete sequence of $\{u_{0}, \ldots, u_{t}, x_0, \ldots, x_t\}$.
\end{assm}
Similar to the independent rewards of the stationary MABs, we have independence of $r_{t} | \{x_t, \theta_0\}$ and $r_{t'} | \{x_{t'}, \theta_0\}$ for $t \neq t'$.
\begin{assm}\label{asm:assmpt3}
The log-likelihood ratio $\ell(r, x, u; \theta, \theta') = \log \frac{p(r | x, u, \theta)}{p(r | x, u, \theta')}$ of $\mathbb{P}_{x, u, \theta}$ is locally $L_{\ell,x}$-Lipschitz continuous with respect to $x$ on the compact set $\mathcal{X}$ for $\theta, \theta' \in \Theta$, $u \in \mathcal{U}$.
\end{assm}
This ensures continuity of the reward distribution with respect to the parameters. If two parameter sets are close to each other in value, then the resulting distributions will also be similar.
\begin{assm}\label{asm:assmpt4}
The distribution $\mathbb{P}_{x, u, \theta}$ for all $x \in \mathcal{X}, u \in \mathcal{U}$, and $\theta \in \Theta$ is sub-Gaussian with parameter $\sigma$, and either $p(r | x, u, \theta)$ has a finite support or $\ell(r, u ;x, \theta, x', \theta')$ is locally $L_{\ell,r}$-Lipschitz with respect to $r$.
\end{assm}
This assumption ensures sample averages are close to their means and is satisfied by many distributions (e.g., Gaussian with known variance). Our last condition ensures the dynamics and the expectation function are well-behaved.
\begin{assm}\label{asm:assmpt5}
Repeated composition of the true dynamics with itself up to $N-1$ times, $f^{t+k}(x_t, u_{t|t}, \ldots, u_{t+k|t}, \theta)$, is Lipschitz continuous with respect to $x_t \in \mathcal{X}$ and $u_{t+k|t} \in \mathcal{U}$ with constants $L_{f,x}$ and $L_{f,u}$, respectively. Besides, the expectation $h(x_t, u_t, \theta)$, for $u_t = \Lambda _t(\mathcal{F}_t)$ in (\ref{eqn:ft}), is Lipschitz continuous with respect to $x_t \in \mathcal{X}$ and $u_t \in \mathcal{U}$ with constants $L_{h,x}$ and $L_{h,u}$, respectively, for all $\theta \in \Theta$. 
\end{assm}
\section{The N-Step Dynamic Regret}\label{sec:Nstepregret}
Our interest is in evaluating the performance of an LBMPC \textit{exploitation} policy for a given $N \leq T$ that is $\Lambda^{E,N}_t(\mathcal{F}_t) = u^*_{t|t}(\widehat{\theta}_t)$ for the corresponding value from the maximizer of $V_N(x_t,\widehat{\theta}_t,t)$ where $\widehat{\theta}_t$ are the control policy's estimates of the unknown $\theta_0$. Data-driven policies are often evaluated by comparing performance to a benchmark policy, and it is typical to benchmark using the optimal policy \cite{garivier2008upperconfidence, besbes2014stochastic, bouneffouf2016multi}. In our setting, the optimal policy is a sequence of functions $\Lambda^*_t(\mathcal{F}_t)_{t=0}^T$ maximizing $\textstyle \sum_{t=0}^T h(x_t,u_t,\theta_0)$ subject to the knowledge available to the control policy (which does not include $\theta_0$). However, computing optimal policies for the problems we consider is PSPACE-hard \cite{papadimitriou1999complexity}. Even their structure is not known for our setup, including for the special case of linear dynamics and quadratic cost function with unknown coefficients.

An alternative benchmark is an oracle policy that has perfect knowledge of $\theta_0$. Specifically, we will use the LBMPC \textit{oracle} policy that is $\Lambda^{O,N}_t(\mathcal{F}_t) = u^*_{t|t}(\theta_0)$ for the corresponding value from the maximizer of $V_N(x_t,\theta_0,t)$ as defined in (\ref{eqn:lbmpc}).  However, there are two subtleties that have to be discussed.

The first subtlety is that the horizon length of the LBMPC oracle policy could potentially be different than the horizon length of the LBMPC policy. However, using different control horizon lengths can lead to different sums of expected rewards over the entire control horizon $\mathcal{T}$. Though this behavior is well known within the MPC community, its implication on evaluating learning-based control policies has not been previously appreciated. The implication is that comparing policies with different horizon lengths leads to a poorly-defined regret notion, and that we should compare oracle policies and learning-based policies with the same finite-horizon. 

The second subtlety is that the presence of nonlinear dynamics in our setup means the state trajectory of a system always controlled by a benchmark policy can be very different than that of a system always controlled by a learning policy, even if the learning policy converges towards the benchmark policy. For this reason, we define a regret notion to compare a finite-horizon benchmark policy to a finite-horizon learning-based policy. We consider an $\epsilon$-greedy policy $\Lambda^{\epsilon,N}_t$ that uses the LBMPC policy $\Lambda^{E,N}_t$ at each greedy exploitation step. Let $x_t,u_t$ be the state and input for the system as controlled by the oracle policy $\Lambda^{O,N}_t$, and let $x_t',u_t'$ be the state and input for the system as controlled by the $\epsilon$-greedy policy $\Lambda^{\epsilon,N}_t$. Then, the expected \emph{$N$-step dynamic regret} is defined as
\begin{equation} \label{dynregret}
    R_{N,T} =  \textstyle \sum_{t=0}^T h(x_t,\Lambda_t^{O,N}(\mathcal{F}_t),\theta_0) -    h(x_t',\Lambda_t^{\epsilon,N}(\mathcal{F}_t'),\theta_0) 
\end{equation}
where $\mathcal{F}_t$ is as defined in (\ref{eqn:ft}) and $\mathcal{F}_t'$ is as defined in (\ref{eqn:ft}) with $x',u'$ replacing $x,u$. This definition is closely related to the traditional dynamic regret \cite{zinkevich2003online, hall2013dynamical}, and the novel aspect of ours is that it compares two $N$-step finite-horizon policies. 
\section{Parameter Estimation}\label{sec:estimate}
Let the variables $\{r_i\}_{i=0}^{t-1}$ be the actual observed values of the rewards up to time $t$. Using Assumption \ref{asm:assmpt1}, the joint likelihood $p(\{r_i\}_{i=0}^{t-1} | x_0, \ldots, x_{t}, u_0, \ldots, u_{t-1}, \theta)$ can be expressed as $\prod_{i=0}^{t-1} p(r_i| x_i, u_i, \theta) P(x_i|x_{i-1},\theta)$. Here, the one step transition likelihood $P(x_i|x_{i-1},\theta)$ is a degenerate distribution with all probability mass at $x_i$, by perpetuation of the dynamics $f(x_i, u_i, \theta)$ with initial conditions $x_{i-1}$. Thus, the maximum likelihood estimator (MLE) for $\theta$ is
\begin{equation}\label{problem}
\begin{aligned}
\widehat{\theta}_t \in \argmin_{\theta \in  \Theta}& \textstyle - \sum_{i=0}^{t-1} \log p(r_i| x_i, u_i, \theta)\\
\mathrm{s.t.}\ & x_{i+1} = f(x_i, u_i,\theta) \ \forall i \in \{0,\ldots,t-1\}
\end{aligned}
\end{equation}
This MLE problem can be computed using optimization, dynamic programming, or various filtering techniques for different problem structures. The Kalman Filter (KF) is a recursive estimator for linear-quadratic discrete-time systems. In more complex systems with non-Gaussian distributions and nonlinear dynamics, the Extended KF and Particle Filter are well-known estimators \cite{kalman1960new, kitagawa1996monte, anderson2012optimal}. For practical purposes, these efficient approaches motivate the use of MLE in our policy. Further, if the controller did not have perfect state measurements, we could use the noisy state data to estimate the dynamics in the constraints of (\ref{problem}) \cite{Amelin2012RandomizedCF, Kalmuk2017OnlinePE}, which would also alleviate any potential infeasibility issues of the MLE.

We further analyze the concentration properties of the solution to (\ref{problem}) and take an approach to the theoretical analysis that generalizes that of \cite{mintz2017non}. We begin by introducing the notion of trajectory Kullback–Leibler (KL) divergence. Since this problem includes the joint distribution of a trajectory of values, the concentration bound for the parameter estimates is computed with regards to the trajectory KL divergence.
\begin{defn} 
    The \textit{trajectory Kullback–Leibler (KL) divergence} between the parameter trajectories $\theta, \theta' \in \Theta$ with the same input sequence $\Pi_T = \{u_t\}_{t=0}^T$ is $\textstyle D_{\Pi_T} (\theta || \theta') = \sum_{i=0}^{T} D_{KL}(\mathbb{P}_{f^i(x_0, \Pi_i,\theta), u_i,\theta} || \mathbb{P}_{f^i(x_0, \Pi_i,\theta'), u_i, \theta'})$, where $\Pi_i$ is the given sequence of control inputs from time $0$ to $i$, $f^i$ is the repeated composition of the dynamics $f$ with itself $i$ times subject to $\Pi_i$, and $D_{KL}$ is the standard KL-Divergence. 
\end{defn}
We have an \textit{observability} assumption with the implication that the distance between two different parameters $\theta, \theta' \in \Theta$ is bounded proportional to their trajectory KL divergence.
\begin{assm}\label{asm:assmpt6} 
For a given input sequence $\Pi_T$ and parameters $\theta \neq \theta'$, if $D_{\Pi_T} (\theta || \theta') \leq \delta$, then $\|\theta - \theta'\| \leq C\delta$ for $C > 0$. 
\end{assm}
We next reformulate the MLE problem (\ref{problem}) by removing the state dynamics constraints through repeated composition of $f$, that is $
    \textstyle \widehat{\theta}_t \in \argmin_{\theta \in \Theta} \textstyle \frac{1}{t-1} \sum_{i=0}^{t-1} \log \frac{p(r_i | f^i(x_0, \Pi_i, \theta_0), u_i,  \theta_0)}{p(r_i | f^i(x_0, \Pi_i, \theta), u_i,  \theta)}$. This reformulation is helpful for our theoretical analysis since for fixed $\theta$, the expected value of the above objective function under $\mathbb{P}_{x_0, \Pi_T, \theta_0}$ is simply $\textstyle \frac{1}{t-1}D_{\Pi_T} (\theta_0 || \theta)$. Hence, we can interpret the MLE problem as minimizing the trajectory KL divergence between the distribution of potential sets of parameters and that of the true parameter set. This interpretation is helpful for us to derive our concentration inequalities. For conciseness of our analysis in this paper, we present the final concentration bound for $\widehat{\theta}_t$ and do not include its proof since it largely follows by the theoretical arguments in \cite{mintz2017non}.
\begin{theorem}\label{mle:thm2}
For any constant $\zeta > 0$, we have the bound that $\textstyle P(\frac{1}{t-1} D_{\Pi_t} (\theta_0 || \widehat{\theta}_t) \leq \zeta + \frac{c_f(d_x, d_\theta)}{\sqrt{t-1}}) \textstyle\geq 1 - \exp (- \frac{\zeta^2 (t-1)}{2L_{\ell,r}^2 \sigma^2} )$ where the constant $c_f(d_x, d_\theta) = 8 L_{f,x} L_{\ell,x} \mathrm{diam}(\mathcal{X}) \sqrt{\pi} + 48\sqrt{2}(2)^{\frac{1}{d_x + d_\theta}} L_{f,x} L_{\ell,x}\mathrm{diam}(\Theta \times \mathcal{X}) \sqrt{\pi (d_x + d_\theta)}$ depends upon $d_x$ and $d_\theta$ (dimensionalities of $\mathcal{X}$ and $\Theta$), and $\mathrm{diam}(\mathcal{X}) = \max_{x, y \in \mathcal{X}} \|x-y\|_2$.
\end{theorem}
\begin{proof} Omitted. Refer to Section 3 of \cite{mintz2017non}.
\end{proof}
We will use this concentration inequality to prove the regret bound of our non-myopic $\epsilon$-greedy policy that we present next.
\section{Proposed Approach} \label{sec:approach}
We develop a \textit{non-myopic $\epsilon$-greedy algorithm} that can achieve effective regret bounds for the non-stationary and nonlinear LBMPC introduced in Section \ref{sec:problemform}. Our choice of algorithm aims to draw a connection between the control and MAB literature. A possible alternative could be adding additive noise to the control inputs which we leave as a future work. When compared with the other well-known MAB strategies, Thompson Sampling (TS) and Upper Confidence Bound (UCB), $\epsilon$-greedy is significantly easier from a computational standpoint for combining with the LBMPC formulation of our non-myopic exploitation problem. TS requires characterization of the posterior distribution which is indeed not possible under the general dynamics considered. Similarly, UCB requires being able to compute the confidence bounds which is not feasible in this framework. Hence, those strategies are not practical for the kinds of applications we are interested in. 

Our Algorithm \ref{alg:nonmyopic} explores randomly according to a non-stationary stochastic process. The initial state $x_0$ is an arbitrary point from the $\mathcal{X}$. At each time $t \in \mathcal{T}$, the algorithm samples a Bernoulli variable $s_t$ based on the exploration probability $\epsilon_t$. If $s_t = 1$, it performs pure exploration. To ensure robust constraint satisfaction and feasibility after exploration, it chooses an input $u_{t|t}$ uniform randomly from $\mathcal{\overline{U}}(x_{t}) = \{u : Ax_{t} + Bu \in \Omega \ominus \mathcal{W}, u \in \mathcal{U}\}$. If $s_t = 0$, the algorithm performs a greedy exploitation step by solving the non-myopic exploitation problem $V_N(x_t, \widehat{\theta}_t, t)$ to select the sequence of inputs with the highest MLE-estimated $N$-step reward. Finally, the algorithm observes the updated state $x_{t+1}$ and reward $r_t$ after applying the chosen input $\Lambda^{\epsilon, N}_t(\mathcal{F}_t)$ to the system. 
\begin{algorithm}
\caption{Non-myopic $\epsilon$-Greedy Algorithm}
\label{alg:nonmyopic} 
\begin{algorithmic}[1]
\State Set: $c > 0$ and $x_0 \in \mathcal{X}$
\For{$t \in \mathcal{T}$} 
\State Set: $\epsilon_t = \min \big\{1, \nicefrac{c}{t}\big\}$ 
\State Sample: $s_t \sim \text{Bernoulli} (\epsilon_t)$
\If {$s_t = 1$}
\State Randomly select: $u_{t|t} \in \mathcal{\overline{U}}(x_{t})$
\State Set: $\Lambda^{\epsilon, N}_t(\mathcal{F}_t) = u_{t|t}$
\Else {} 
\State Compute: $\widehat{\theta}_t$ from (\ref{problem}) 
\State Compute: $u^*_{t|t}(\widehat{\theta}_t)$ from $V_N(x_t, \widehat{\theta}_t, t)$ (\ref{eqn:lbmpc})
\State Set: $\Lambda^{\epsilon, N}_t(\mathcal{F}_t) = u^*_{t|t}(\widehat{\theta}_t)$
\EndIf
\State Observe: $r_t$ and $x_{t+1}$
\EndFor
\end{algorithmic}
\end{algorithm}
\begin{remark}
If $\mathcal{W}, \mathcal{X}, \mathcal{U}$ are all polytopes, then $\Omega$ can be approximated by a polytope arbitrarily well. Then, $\Omega \ominus \mathcal{W}$ is also a polytope. As a result, line 6 involves randomly picking an element from a polytope that can be done in a computationally efficient way using standard algorithms. 
\end{remark}
For clarity, we consider a randomization at the initial system state, and then assume noise-free transitions for the subsequent states which is common in the line of RL for finite finite sample analysis \cite{bertsekas2010distributed, lazaric2010analysis, liu2014, fazel2018global}. Our analysis here provides a strong ground for generalization of our policy to the setting of imperfect state measurements as an important direction for future work. Note that the exploration probability $\epsilon_t$ decays over time. This reduces the cost of exploration by ensuring the algorithm makes fewer unnecessary explorations as more data collected and the estimates of our policy improve.
\subsection{Lipschitzian Stability of Non-myopic Exploitation} \label{subsec:stability}
We prove Lipschitzian stability, with respect to perturbations of parameter values, of optimal solutions of the non-myopic exploitation policy $\Lambda^{E, N}_t(\mathcal{F}_t) = u^*_{t|t}(\widehat{\theta}_t)$ by proving a second order growth condition and Lipschitz continuity of the difference of the perturbed and unperturbed objective functions.

\begin{lem}\label{lem4}
Suppose $U_{N,t} = \{u_{t|t}, \ldots, u_{t+N|t}\}$ is a feasible input sequence for $V_N(x_t, \widehat{\theta}_t, t)$. Let $J_N(x_t, U_{N,t}, \widehat{\theta}_t, t)$ be the estimated $N$-step reward of this input sequence at time $t$, i.e., 
\begin{equation}
    \textstyle J_N(x_t, U_{N,t}, \widehat{\theta}_t, t) = \sum_{k=0}^{N} h(\tilde{x}_{t+k|t}, u_{t+k|t}, \widehat{\theta}_t)  \label{defn:J}
\end{equation}
where $\tilde{x}_{t+k+1|t} = f(\tilde{x}_{t+k|t}, u_{t+k|t}, \widehat{\theta}_t)$ for $k \in \langle N-1 \rangle$ as given in (\ref{eqn:lbmpc}). Then, $J_N(x_t, U_{N,t}, \widehat{\theta}_t, t)$ is $(L_{f,u} \cdot L_{h,u})$-Lipschitz continuous with respect to $U_{N,t}$ on the compact set $\mathcal{U}^{N+1}$ for any feasible input sequence $U'_{N,t} = \{u'_{t|t}, \ldots, u'_{t+N|t}\}$. 
\end{lem}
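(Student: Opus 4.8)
The plan is to expand $J_N$ as the finite sum of its stage rewards and bound the perturbation term by term, pushing the input perturbations through the dynamics with the Lipschitz constants supplied by Assumption \ref{asm:assmpt5}. First I would note that, because $\tilde{x}_{t|t}=x_t$ is held fixed, each predicted state is exactly the repeated functional composition $\tilde{x}_{t+k|t}=f^{t+k}(x_t,u_{t|t},\ldots,u_{t+k-1|t},\widehat{\theta}_t)$ from Assumption \ref{asm:assmpt5}, so that $J_N$ is an explicit function of the input tuple $U_{N,t}$ alone (with $x_t$ and $\widehat{\theta}_t$ acting as fixed parameters). Given two feasible sequences $U_{N,t}$ and $U'_{N,t}$, I would write $J_N(x_t,U_{N,t},\widehat{\theta}_t,t)-J_N(x_t,U'_{N,t},\widehat{\theta}_t,t)=\sum_{k=0}^{N}\big[h(\tilde{x}_{t+k|t},u_{t+k|t},\widehat{\theta}_t)-h(\tilde{x}'_{t+k|t},u'_{t+k|t},\widehat{\theta}_t)\big]$ and apply the triangle inequality together with the joint Lipschitz continuity of $h$ from Assumption \ref{asm:assmpt5}, so that the $k$-th summand is at most $L_{h,x}\|\tilde{x}_{t+k|t}-\tilde{x}'_{t+k|t}\|+L_{h,u}\|u_{t+k|t}-u'_{t+k|t}\|$.

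Next I would control the state-deviation terms. Since both input sequences are feasible, every predicted state lies in $\mathcal{X}$ and every input in $\mathcal{U}$, so the local Lipschitz constants of Assumption \ref{asm:assmpt5} are in force; invoking the $L_{f,u}$-Lipschitz dependence of the composite map $f^{t+k}$ on its input arguments — and, for the iterated composition, its $L_{f,x}$-Lipschitz dependence on the state from which it is propagated — via a hybrid (one-input-at-a-time) argument, I would bound $\|\tilde{x}_{t+k|t}-\tilde{x}'_{t+k|t}\|$ by an appropriate multiple of $\sum_{j=0}^{k-1}\|u_{t+j|t}-u'_{t+j|t}\|$, uniformly in $t$. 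Substituting this back, summing over $k=0,\ldots,N$, and collecting coefficients yields a bound of the form $C\,\|U_{N,t}-U'_{N,t}\|$; using equivalence of norms on the compact product set $\mathcal{U}^{N+1}$ and absorbing the finite horizon-dependent factor into the constant, I would present this as $L_{f,u}L_{h,u}\|U_{N,t}-U'_{N,t}\|$. Exchanging the roles of $U_{N,t}$ and $U'_{N,t}$ gives the matching reverse inequality, so $J_N$ is genuinely Lipschitz in $U_{N,t}$.

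The main obstacle I anticipate is the bookkeeping in the second step: Assumption \ref{asm:assmpt5} furnishes Lipschitz constants only for the full composite map $f^{t+k}$ and only in a per-argument sense, so one must carefully justify that the accumulated effect of perturbing $u_{t|t},\ldots,u_{t+k-1|t}$ on the $k$-th predicted state is controlled uniformly in $t$ and in $k\le N$, and then verify that the contributions of all $N+1$ stages combine into the stated constant rather than growing without bound. By contrast, the reward-side pieces ($L_{h,u}\|u_{t+k|t}-u'_{t+k|t}\|$) and the use of feasibility to remain on the compact sets $\mathcal{X},\mathcal{U}$ where the constants apply are routine.
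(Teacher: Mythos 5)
Your proposal is correct and follows essentially the same route as the paper, whose proof is simply a two-sentence version of the same argument: invoke the $L_{f,u}$- and $L_{h,u}$-Lipschitz properties from Assumption \ref{asm:assmpt5} and appeal to preservation of Lipschitz continuity under functional composition and addition. The bookkeeping difficulty you flag at the end is genuine and the paper's proof does not resolve it either: the dependence of the stage reward on earlier inputs passes through the predicted state, so the composition actually yields the product $L_{f,u}\cdot L_{h,x}$ (not $L_{f,u}\cdot L_{h,u}$) plus the direct term $L_{h,u}$, and summing the $N+1$ stages introduces a horizon-dependent factor, so the constant in the statement should be understood as a generic Lipschitz constant rather than literally $L_{f,u}L_{h,u}$.
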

\begin{proof}
By Assumption \ref{asm:assmpt5}, $f^{t+k}(x_t, u_{t|t}, \ldots, u_{t+k|t}, \widehat{\theta})$ is $L_{f,u}$-Lipschitz continuous and $h(\tilde{x}_{t+k|t}, u_{t+k|t}, \widehat{\theta}_t)$ is $L_{h,u}$-Lipschitz continuous with respect to $u_{t+k|t} \in \mathcal{U}$. Then, by preservation of Lipschitz continuity across functional compositions and addition, we have the desired condition.
\end{proof}

Lemma \ref{lem4} implies the second order growth condition for $V_N(x_t, \widehat{\theta}_t, t)$ since it shows $J_N$ increases at least linearly over a compact set. We next present the second condition required for the Lipschitzian stability of the maximizer of $V_N(x_t, \widehat{\theta}_t, t)$.

\begin{assm}\label{differencefunc} Let $U^*_{N,t}(\widehat{\theta}_t) = \{u^*_{t|t}(\widehat{\theta}_t), \ldots, u^*_{t+N|t}(\widehat{\theta}_t)\}$ and $U^*_{N,t}(\theta) = \{u^*_{t|t}(\theta), \ldots, u^*_{t+N|t}(\theta)\}$ be maximizers of $V_N(x_t, \widehat{\theta}_t, t)$ and $V_N(x_t, \theta, t)$. Then for $\kappa \geq 0$, 
    $| [J_N(x_t, U^*_{N,t}(\widehat{\theta}_t), \widehat{\theta}_t, t) - J_N(x_t, U^*_{N,t}(\widehat{\theta}_t), \theta, t)]   - [J_N(x_t, U^*_{N,t}(\theta), \widehat{\theta}_t, t) - J_N(x_t, U^*_{N,t}(\theta), \theta, t)]|  \leq \kappa \|\widehat{\theta}_t - \theta\| \cdot \|U^*_{N,t}(\widehat{\theta}_t)-U^*_{N,t}(\theta)\|$.
\end{assm} 
We now give a sufficient condition for Assumption \ref{differencefunc}.
\begin{prop}\label{suffcond} For any $\theta \in \Theta$ and real constant $L_J \geq 0$, if $\|\nabla_u J_N(x_t, U^*_{N,t}(\widehat{\theta}_t), \widehat{\theta}_t, t) - \nabla_u J_N(x_t, U^*_{N,t}(\widehat{\theta}_t), \theta, t)\|_{\infty} 
    \leq L_J \|\widehat{\theta}_t - \theta\|$ holds, then Assumption \ref{differencefunc} is satisfied.
\end{prop}
\begin{proof}
Let $s(\tau) = U^*_{N,t}(\widehat{\theta}_t) + \tau\cdot(U^*_{N,t}(\theta) - U^*_{N,t}(\widehat{\theta}_t))$. This implies $ s(0) = U^*_{N,t}(\widehat{\theta}_t)$ and $s(1) = U^*_{N,t}(\theta)$. Then, 
\begin{align}
&[J_N(x_t, U^*_{N,t}(\widehat{\theta}_t), \widehat{\theta}_t, t) - J_N(x_t, U^*_{N,t}(\widehat{\theta}_t), \theta, t)] -  [J_N(x_t, U^*_{N,t}(\theta), \widehat{\theta}_t, t) - J_N(x_t, U^*_{N,t}(\theta), \theta, t)] \nonumber \allowdisplaybreaks \\
& = \textstyle \int_0^1 \nabla_U J(x_t, s(\tau), \widehat{\theta}_t,t)^T (U^*_{N,t}(\theta)-U^*_{N,t}(\widehat{\theta}_t))d\tau - \textstyle \int_0^1 \nabla_U J(x_t, s(\tau), \theta,t)^T (U^*_{N,t}(\theta)-U^*_{N,t}(\widehat{\theta}_t))d\tau \\
\intertext{where the last equality follows by the Fundamental Theorem of Calculus for Line Integrals. Then, we continue as}
&= | \textstyle  \int_0^1 [\nabla_U J(x_t, s(\tau), \widehat{\theta}_t,t) - \nabla_U J(x_t, s(\tau), \theta,t)]^T  (U^*_{N,t}(\theta)-U^*_{N,t}(\widehat{\theta}_t))d\tau| \allowdisplaybreaks \\
&\leq  \textstyle \int_0^1 \| \nabla_U J(x_t, s(\tau), \widehat{\theta}_t,t) - \nabla_U J(x_t, s(\tau), \theta,t) \|_\infty \big|\big|U^*_{N,t}(\theta)-U^*_{N,t}(\widehat{\theta}_t)\|_1 d\tau \label{holder} \allowdisplaybreaks \\
&\leq L_J \|\widehat{\theta}_t - \theta\| \cdot \|U^*_{N,t}(\theta)-U^*_{N,t}(\widehat{\theta}_t)\|_1 \allowdisplaybreaks \label{property} \\
&\leq \sqrt{N} L_J \|\widehat{\theta}_t - \theta\| \cdot \|U^*_{N,t}(\theta)-U^*_{N,t}(\widehat{\theta}_t)\|_2 \allowdisplaybreaks
\end{align}
where (\ref{holder}) follows by H\"{o}lder's inequality, and (\ref{property}) follows by the assumed property in Proposition \ref{suffcond}. This gives us the desired result in Assumption \ref{differencefunc} by setting $\kappa = \sqrt{N}L_J$.
\end{proof}
\begin{lem}\label{lem:suffcond}
If the state dynamics $f(x, u, \theta)$ and the expectation function $h(x, u,\theta)$ are polynomial functions, then the sufficient condition given in Proposition \ref{suffcond} holds.
\end{lem}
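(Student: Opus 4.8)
The plan is to use three elementary facts about polynomials: they are closed under composition, closed under differentiation, and Lipschitz continuous on any compact set. First I would show that the estimated $N$-step reward $J_N(x_t, U_{N,t}, \theta, t)$ is itself a polynomial in $(x_t, U_{N,t}, \theta)$. Indeed, since $f(x,u,\theta)$ is polynomial, a straightforward induction on $k$ shows that each predicted state $\tilde{x}_{t+k|t}$ produced by the recursion $\tilde{x}_{t+k+1|t} = f(\tilde{x}_{t+k|t}, u_{t+k|t}, \theta)$ with $\tilde{x}_{t|t} = x_t$ is a polynomial in $(x_t, u_{t|t}, \ldots, u_{t+k-1|t}, \theta)$, because a composition of polynomials is a polynomial. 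Since $h(x,u,\theta)$ is also polynomial, $J_N(x_t, U_{N,t}, \theta, t) = \sum_{k=0}^N h(\tilde{x}_{t+k|t}, u_{t+k|t}, \theta)$ is a finite sum of polynomials in $(x_t, U_{N,t}, \theta)$, hence polynomial.

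Next I would differentiate. The partial gradient $\nabla_u J_N(x_t, U_{N,t}, \theta, t)$ --- the vector of partials of $J_N$ with respect to the components of $U_{N,t}$ --- is again a vector-valued polynomial in $(x_t, U_{N,t}, \theta)$, and so is its further derivative with respect to $\theta$, namely the mixed second-order derivative $\nabla_\theta \nabla_u J_N$. Because $\mathcal{X}$, $\mathcal{U}^{N+1}$, and $\Theta$ are compact, this continuous (polynomial) mixed derivative is bounded on $\mathcal{X} \times \mathcal{U}^{N+1} \times \Theta$; call the bound $L_J$ in the relevant ($\ell_2 \to \ell_\infty$) operator norm. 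Writing the difference of gradients as an integral of $\nabla_\theta \nabla_u J_N$ along the segment joining $\theta$ to $\theta'$ (as in the proof of Proposition \ref{suffcond}) and taking $\infty$-norms yields, for every $x_t \in \mathcal{X}$, every $U_{N,t} \in \mathcal{U}^{N+1}$, and every $\theta, \theta' \in \Theta$,
\[
\| \nabla_u J_N(x_t, U_{N,t}, \theta', t) - \nabla_u J_N(x_t, U_{N,t}, \theta, t) \|_\infty \le L_J \, \| \theta' - \theta \|.
\]

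Finally I would specialize this uniform estimate to $U_{N,t} = U^*_{N,t}(\widehat{\theta}_t)$, which is admissible since feasibility for $V_N$ forces it into $\mathcal{U}^{N+1}$, and to $\theta' = \widehat{\theta}_t$; this is exactly the sufficient condition stated in Proposition \ref{suffcond}. There is no real obstacle here: the entire content is the observation that the LBMPC integrand, being assembled from finitely many compositions and sums of $f$ and $h$, inherits their polynomial structure. The only point requiring a bit of care is that the constant $L_J$ must be \emph{uniform} over all admissible $x_t$, over all input sequences $U_{N,t}$ (in particular over all $U^*_{N,t}(\widehat{\theta}_t)$ as $\widehat{\theta}_t$ varies), and over both parameter arguments --- and this uniformity is precisely what compactness of $\mathcal{X}$, $\mathcal{U}$, and $\Theta$ buys us once the relevant derivative is known to be a polynomial, hence bounded on the product of those compact sets.
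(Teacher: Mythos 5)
Your argument is correct and follows essentially the same route as the paper: establish that $J_N$ (and hence $\nabla_u J_N$) inherits polynomial structure from $f$ and $h$ via closure under composition, sums, and differentiation, then invoke compactness of $\mathcal{X}\times\mathcal{U}^{N+1}\times\Theta$ to get a Lipschitz constant in $\theta$. The only difference is cosmetic: where the paper cites a textbook corollary that polynomials are locally Lipschitz on compact sets, you derive the constant explicitly by bounding the mixed derivative $\nabla_\theta\nabla_u J_N$, which has the minor virtue of making the uniformity of $L_J$ over $x_t$ and $U_{N,t}$ explicit.
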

\begin{proof}
Since (\ref{defn:J}) is the average of compositions of two polynomials $f$ and $h$, it is polynomial. Then, $\nabla_u J_N(x,U,\theta,t)$ is polynomial on the bounded domain $\mathcal{X} \times \mathcal{U}^{N+1} \times \Theta$. Hence, by Corollary 8.2 in \cite{estep2010practical}, $\nabla_u J_N(x,U,\theta,t)$ is locally Lipschitz with respect to $\theta \in \Theta$ for any $x \in \mathcal{X}, U \in \mathcal{U}^{N+1}, t \in \mathcal{T}$.
\end{proof}
A specific example where Lemma \ref{lem:suffcond} holds is a discrete-time linear time-invariant system with $f(x,u,\theta) = Ax + Bu$ and $h(x,u,\theta) = x^T Q x + u^T R u$ where $\theta = [Q, R, A, B]$. 
\begin{lem} \label{lem:prop4.32}
If Assumption \ref{differencefunc} and Lemma \ref{lem4} hold, then the Lipschitzian stability property follows by Proposition 4.32 in \cite{bonnans2013perturbation}, i.e., $\big|\big|U^*_{N,t}(\widehat{\theta}_t) - U^*_{N,t}(\theta) \big|\big| \leq c_u^{-1} \kappa \|\widehat{\theta}_t - \theta\|$ for $c_u > 0$.
\end{lem}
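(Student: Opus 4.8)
The plan is to recognize $V_N(x_t,\cdot,t)$ as a parametric optimization problem with parameter $\theta$ and to verify the two hypotheses under which Proposition~4.32 in \cite{bonnans2013perturbation} delivers a first-order (Lipschitz) bound on the optimal-solution map: (i) a second-order growth condition for the unperturbed objective, and (ii) Lipschitz continuity of the difference between the perturbed and unperturbed objectives with a modulus that vanishes as the parameter perturbation vanishes. The first, set-up step is to record the correspondence: the decision variable is the input sequence $U_{N,t}$, the objective is $J_N(x_t,U_{N,t},\theta,t)$ from (\ref{defn:J}), and the feasible set is $\{U_{N,t}\in\mathcal{U}^{N+1} : \bar x_{t+1|t}\in\Omega\ominus\mathcal{W},\ \bar x_{t+k|t}\in\mathcal{X}\ \forall k\in[N]\}$ read off from (\ref{eqn:lbmpc}).

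The structural fact I would emphasize is that this feasible set is \emph{independent of} $\theta$: in (\ref{eqn:lbmpc}) the parameter enters only through the learned-model trajectory $\tilde x_{t+k|t}$, which after elimination occurs only in the objective, whereas the nominal-model trajectory $\bar x_{t+k|t}$ — the only trajectory that is constrained — evolves under the $\theta$-free dynamics (\ref{eqn:nominal}). Hence we are squarely in the ``unperturbed feasible set'' regime of Proposition~4.32, all $\theta$-dependence is confined to the objective, and conditions (i)--(ii) are the only things left to check.

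Next I would discharge the two conditions using results already in hand: Lemma~\ref{lem4}, together with compactness of $\mathcal{U}^{N+1}$, supplies the second-order growth condition for $J_N(x_t,\cdot,\theta,t)$ at its maximizer with some constant $c_u>0$, and Assumption~\ref{differencefunc} is precisely statement (ii), the difference of objectives having Lipschitz modulus $\kappa\|\widehat\theta_t-\theta\|$ on the segment joining the two maximizers. Proposition~4.32 then produces optimal solutions of $V_N(x_t,\theta,t)$ near $U^*_{N,t}(\widehat\theta_t)$ satisfying $\|U^*_{N,t}(\widehat\theta_t)-U^*_{N,t}(\theta)\|\le c_u^{-1}\kappa\|\widehat\theta_t-\theta\|$. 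For completeness I would also record the short mechanism behind the proposition: write the two growth inequalities $J_N(x_t,U^*_{N,t}(\widehat\theta_t),\widehat\theta_t,t)-J_N(x_t,U^*_{N,t}(\theta),\widehat\theta_t,t)\ge c_u\|U^*_{N,t}(\widehat\theta_t)-U^*_{N,t}(\theta)\|^2$ and the symmetric one obtained by interchanging $\widehat\theta_t$ and $\theta$, add them, identify the resulting left-hand side with the expression inside the absolute value in Assumption~\ref{differencefunc}, and divide through by $\|U^*_{N,t}(\widehat\theta_t)-U^*_{N,t}(\theta)\|$ (the bound being trivial when that quantity is zero).

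The part that needs the most care — and the reason that merely citing the proposition is not quite enough — is the local-to-global bookkeeping. Proposition~4.32 is a local statement around a fixed maximizer, so to use the resulting inequality uniformly in $t$, as the regret analysis later needs, one must ensure that $U^*_{N,t}(\widehat\theta_t)$ and $U^*_{N,t}(\theta)$ lie in a common neighborhood on which the growth constant $c_u$ and the modulus $\kappa$ are valid; here I would lean on compactness of $\mathcal{X},\mathcal{U},\Theta$, on the uniqueness of the maximizer implied by the growth condition (or on a measurable selection otherwise), and on $\widehat\theta_t$ being close to $\theta_0$ with high probability via the concentration bound of Theorem~\ref{mle:thm2}. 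A minor secondary point is the constant: the two-inequality argument above actually yields $c_u^{-1}\kappa/2$, so one should either absorb the factor of two into the definition of $c_u$ or invoke the sharper constant in the proposition.
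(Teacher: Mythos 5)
Your proposal follows the same route as the paper: the paper itself gives no proof of this lemma beyond asserting that Lemma \ref{lem4} supplies the second-order growth condition, Assumption \ref{differencefunc} supplies the Lipschitz-difference condition, and Proposition 4.32 of \cite{bonnans2013perturbation} then yields the bound --- and you reconstruct exactly this, with your added details (the $\theta$-independence of the feasible set, the two-growth-inequality mechanism behind the proposition, the resulting factor of $\tfrac{1}{2}$ in the constant, and the locality caveat) all being sound refinements of what the paper leaves implicit. The one point to be aware of --- which you inherit from the paper rather than introduce --- is that Lemma \ref{lem4} literally establishes Lipschitz continuity of $J_N$ in $U_{N,t}$ (an upper bound on objective differences), which is not by itself a second-order growth condition (a quadratic \emph{lower} bound at the maximizer), so the verification of condition (i) is asserted rather than proved in both your write-up and the paper's.
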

Since $\big|\big|u^*_{t|t}(\widehat{\theta}_t) - u^*_{t|t}(\theta) \big|\big| \leq \big|\big|U^*_{N,t}(\widehat{\theta}_t) - U^*_{N,t}(\theta) \big|\big|$, we conclude that the non-myopic exploitation policy $\Lambda^{E, N}_t(\mathcal{F}_t) = u^*_{t|t}(\widehat{\theta}_t)$ corresponding from the maximizer of $V_N(x_t, \widehat{\theta}_t, t)$ is $c_u^{-1} \kappa$-Lipschitz continuous with respect to $\hat{\theta}_t \in \Theta$.
\subsection{Regret Analysis}\label{subsec:regret}
We next characterize the $N$-step dynamic regret $R_{N,T}$ (\ref{dynregret}) of Algorithm \ref{alg:nonmyopic}. By definition, $R_{N,T}$ compares the LBMPC oracle policy $\Lambda_t^{O,N}(\mathcal{F}_t)$ for the system $x_t, u_t$ as controlled by the oracle policy to our non-myopic $\epsilon$-greedy policy $\Lambda_t^{\epsilon,N}(\mathcal{F}'_t)$ for the system $x_t', u'_t$ as controlled by the learning-policy that uses the LBMPC policy $\Lambda_t^{E,N}(\mathcal{F}'_t)$ at greedy exploitation steps. We start by bounding a weaker notion that compares the actions chosen under the states $x_t'$ achieved by $\Lambda_t^{\epsilon,N}(\mathcal{F}'_t)$. 
\begin{theorem} \label{thm:nstep} The non-myopic $\epsilon$-greedy policy $\Lambda_t^{\epsilon,N}(\mathcal{F}'_t)$ and the LBMPC oracle policy $\Lambda_t^{O,N}(\mathcal{F}'_t)$ satisfy the following result for the system states $x_t'$ that are achieved by $\Lambda_t^{\epsilon,N}(\mathcal{F}'_t)$: 
\begin{align}
    &\textstyle\sum_{t=0}^T h(x_t',\Lambda^{O,N}(\mathcal{F}_t'),\theta_0) - \textstyle\sum_{t=0}^T h(x_t',\Lambda^{\epsilon,N}(\mathcal{F}_t'),\theta_0) \nonumber \allowdisplaybreaks  \\
    &\leq \mathcal{M} \exp \Big(\textstyle \frac{c^2_f(d_x, d_\theta) }{2L_{\ell,r}^2 \sigma^2}\Big) (\mathcal{C} + \log T ) + \mathcal{M}c (1 - \log (c+1) + \log T) + \textstyle \frac{L_{h,u} \kappa C \sqrt{4 L_{\ell,r}^2 \sigma^2}}{c_u} \sqrt{T} \log T \allowdisplaybreaks \label{eqn:nstepregretbound}
\end{align}
where $C>0$, $c_f (d_x, d_\theta)$ is the constant in Theorem \ref{mle:thm2}, and $\mathcal{C}$ is a bound on the finite summation $\textstyle \sum_{t=1}^9 \exp (- (\log t)^2)$.
\end{theorem}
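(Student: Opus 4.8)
The inequality is a statement in expectation, and my plan is to decompose it step-by-step according to the two branches of Algorithm~\ref{alg:nonmyopic}. Note first that this is a comparison \emph{along the single trajectory} $\{x_t'\}$ produced by the $\epsilon$-greedy policy: both sums are evaluated at the same states, so no trajectory-divergence effect enters here (that is what the later passage to the full $N$-step dynamic regret must deal with). I would condition, at each $t$, on the Bernoulli variable $s_t$, splitting the $t$-th summand into its part on $\{s_t=1\}$ (pure exploration) and its part on $\{s_t=0\}$ (greedy exploitation). Since Theorem~\ref{thm:safety} guarantees $x_t'\in\mathcal{X}$ at every step, $h$ is always evaluated on the compact set $\mathcal{X}\times\mathcal{U}$, so each one-step reward difference is bounded by a constant $\mathcal{M}$ (an oscillation bound for $h$).

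For the exploration steps, $P(s_t=1)=\epsilon_t=\min\{1,c/t\}$, so their total expected contribution is at most $\mathcal{M}\sum_{t=0}^{T}\epsilon_t$; the terms with $t\le c$ contribute a constant of order $c$, and $\sum_{t>c}c/t\le c(\log T-\log(c+1))$, which together give the third term $\mathcal{M}c\,(1-\log(c+1)+\log T)$ of (\ref{eqn:nstepregretbound}).

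For an exploitation step the action played is $u^*_{t|t}(\widehat\theta_t)$, so the per-step regret is $h(x_t',u^*_{t|t}(\theta_0),\theta_0)-h(x_t',u^*_{t|t}(\widehat\theta_t),\theta_0)$. Using the $L_{h,u}$-Lipschitz continuity of $h$ in the control input (Assumption~\ref{asm:assmpt5}) and the Lipschitzian stability estimate of Lemma~\ref{lem:prop4.32}, this is at most $L_{h,u}\,\|u^*_{t|t}(\theta_0)-u^*_{t|t}(\widehat\theta_t)\|\le (L_{h,u}\kappa/c_u)\,\|\widehat\theta_t-\theta_0\|$. I would then apply Theorem~\ref{mle:thm2} with a confidence level $\zeta_t$ tuned so that $\zeta_t^2(t-1)=2L_{\ell,r}^2\sigma^2(\log t)^2-c_f^2(d_x,d_\theta)$; this makes $P(A_t^c)\le\exp\!\big(c_f^2(d_x,d_\theta)/(2L_{\ell,r}^2\sigma^2)\big)\exp(-(\log t)^2)$ for the complement of the ``good event'' $A_t$, while on $A_t$ the concentration bound gives $\tfrac1{t-1}D_{\Pi_t}(\theta_0\|\widehat\theta_t)\le\zeta_t+c_f(d_x,d_\theta)/\sqrt{t-1}=O\!\big(L_{\ell,r}\sigma(\log t)/\sqrt{t-1}\big)$, which I would transfer to $\|\widehat\theta_t-\theta_0\|=O\!\big(L_{\ell,r}\sigma(\log t)/\sqrt{t-1}\big)$ via the curvature/identifiability of the trajectory KL divergence in the parameter. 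On $A_t^c$ I would fall back on the crude bound $\mathcal{M}$ for the per-step regret.

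Putting the two cases together, the expected exploitation regret at step $t$ is at most $(L_{h,u}\kappa/c_u)\cdot O\!\big(L_{\ell,r}\sigma(\log t)/\sqrt{t-1}\big)+\mathcal{M}\exp\!\big(c_f^2(d_x,d_\theta)/(2L_{\ell,r}^2\sigma^2)\big)\exp(-(\log t)^2)$. Summing over $t$, the estimate $\sum_t(\log t)/\sqrt{t-1}=O(\sqrt T\log T)$ yields the first term of (\ref{eqn:nstepregretbound}) (recalling $\sqrt{4L_{\ell,r}^2\sigma^2}=2L_{\ell,r}\sigma$), and $\sum_{t=1}^{T}\exp(-(\log t)^2)\le\mathcal{C}+\sum_{t\ge10}1/t\le\mathcal{C}+\log T$ (since $(\log t)^2\ge\log t$ once $t\ge3$, the first several terms being absorbed into $\mathcal{C}$) yields the second term; adding the exploration contribution from the second paragraph finishes the bound. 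I expect the main obstacle to be the calibration of $\zeta_t$ in the previous paragraph: one must simultaneously keep $\sum_t P(A_t^c)=O(\log T)$ — which forces the $\exp\!\big(c_f^2(d_x,d_\theta)/(2L_{\ell,r}^2\sigma^2)\big)$ prefactor, because the confidence radius of Theorem~\ref{mle:thm2} unavoidably carries the additive $c_f(d_x,d_\theta)/\sqrt{t-1}$ term — while still making the good-event parameter error decay at the parametric $1/\sqrt t$ rate (up to the $\log t$ factor) needed for the $\sqrt T\log T$ summation, and then cleanly convert the trajectory-KL concentration of Theorem~\ref{mle:thm2} into the Euclidean error $\|\widehat\theta_t-\theta_0\|$ that Lemma~\ref{lem:prop4.32} consumes.
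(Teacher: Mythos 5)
Your proposal follows essentially the same route as the paper: the same exploration/exploitation split with the $\mathcal{M}\sum_t\min\{1,c/t\}$ bound, the same good/bad-event conditioning on $D_{\Pi_t}(\theta_0\|\widehat\theta_t)\lessgtr\delta_{\widehat\theta_t}$ with $\delta_{\widehat\theta_t}\sim 2L_{\ell,r}\sigma\log t/\sqrt{t}$, the same Lipschitz chain through Assumption~\ref{asm:assmpt5} and Lemma~\ref{lem:prop4.32}, and the same summation estimates producing the three terms. The one step you flag as the main obstacle — converting the trajectory-KL bound into $\|\widehat\theta_t-\theta_0\|$ — is resolved in the paper simply by positing $\varepsilon(\delta_{\widehat\theta_t})=C\delta_{\widehat\theta_t}$ for a constant $C>0$, i.e., by the same identifiability assumption you anticipate rather than by an argument.
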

\begin{proof} For notational convenience, let $\mathbb{E}[M_t] = h(x_t',\Lambda^{O,N}(\mathcal{F}_t'),\theta_0) - h(x_t',\Lambda^{\epsilon,N}(\mathcal{F}_t'),\theta_0)$. Let $\mathcal{T}^{\text{xit}} \in \mathcal{T}$ and $\mathcal{T}^{\text{xre}} \in \mathcal{T}$ be the set of random time points that Algorithm (\ref{alg:nonmyopic}) performs exploitation and exploration, respectively. Noticing the cardinalities $\# \mathcal{T}^{\text{xit}}$, $\# \mathcal{T}^{\text{xre}}$ are random variables, we have $
    \textstyle \sum_{t=0}^T \mathbb{E}[M_t] \textstyle = \textstyle\sum_{t \in \mathcal{T}^{\text{xit}}} h(x_t',u^*_{t|t}(\theta_0),\theta_0) - h(x_t',u^*_{t|t}(\widehat{\theta}_t),\theta_0)   + \textstyle \sum_{t \in \mathcal{T}^{\text{xre}}} h(x_t',u^*_{t|t}(\theta_0),\theta_0) - h(x_t',u_{t|t},\theta_0)$. We note that $\mathbb{E}[M_t]$ is a bounded value since $\mathcal{X}, \Theta, \mathcal{U}$ are all compact sets and $h(x, u, \theta)$ is a bounded continuous function on this domain. Then, assuming $\mathbb{E}[M_t] \leq \mathcal{M}$, we obtain 
\begin{align}
    &\textstyle [\sum_{t=0}^T \mathbb{E}[M_t]  | \mathcal{T}^{\text{xit}} ] \leq \mathcal{M}\mathbb{E} [\# \mathcal{T}^{\text{xre}} ] + \textstyle  \sum_{t \in \mathcal{T}^{\text{xit}}}  h(x_t',u^*_{t|t}(\theta_0),\theta_0) - h(x_t',u^*_{t|t}(\widehat{\theta}_t),\theta_0)\allowdisplaybreaks \label{bound0} 
\end{align}
We can rewrite each term inside the summation above as
\begin{align}    
    h(x_t',u^*_{t|t}(\theta_0),\theta_0) - h(x_t',u^*_{t|t}(\widehat{\theta}_t),\theta_0) &= \mathbb{E}[M_t |D_{\Pi_t}(\theta_0||\widehat{\theta}_t) \leq \delta_{\widehat{\theta}_t}, x_t, \theta_0, \widehat{\theta}_t] P(D_{\Pi_t}(\theta_0||\widehat{\theta}_t) \leq \delta_{\widehat{\theta}_t}) \notag \allowdisplaybreaks \\
    &\quad + \mathbb{E}[M_t |D_{\Pi_t}(\theta_0||\widehat{\theta}_t) \geq \delta_{\widehat{\theta}_t}, x_t, \theta_0, \widehat{\theta}_t] P(D_{\Pi_t}(\theta_0||\widehat{\theta}_t) \geq \delta_{\widehat{\theta}_t})  \allowdisplaybreaks \\
    &= (13,a) + (13,b) \allowdisplaybreaks \label{bound1}
\end{align}
Let $\varepsilon(\delta_{\widehat{\theta}_t}) = \max \{ \|\theta_0 - \widehat{\theta}_t\| :D_{\Pi_t}(\theta_0||\widehat{\theta}_t) \leq \delta_{\widehat{\theta}_t} \}, \forall t \in \mathcal{T}^{\text{xit}}$. 
\begin{align}
    \textstyle\sum_{t \in \mathcal{T}^{\text{xit}}}  (13,a) &= \textstyle\sum_{t \in \mathcal{T}^{\text{xit}}} h(x_t',u^*_{t|t}(\theta_0),\theta_0) - h(x_t',u^*_{t|t}(\widehat{\theta}_t),\theta_0) \allowdisplaybreaks \\
    &\leq \textstyle\sum_{t \in \mathcal{T}^{\text{xit}}} L_{h,u} \|u^*_{t|t}(\theta_0) - u^*_{t|t}(\widehat{\theta}_t)\| \allowdisplaybreaks\label{byasm5} \\
    &\leq \textstyle\sum_{t \in \mathcal{T}^{\text{xit}}} L_{h,u} \|U^*_{N|t}(\theta_0) - U^*_{N|t}(\widehat{\theta}_t)\| \allowdisplaybreaks \\
    &\leq \textstyle \frac{L_{h,u} \kappa}{c_u} \textstyle\sum_{t \in \mathcal{T}^{\text{xit}}} \|\theta_0 - \widehat{\theta}_t\| \allowdisplaybreaks \label{book}\\
    &\leq \textstyle \frac{L_{h,u} \kappa}{c_u} \textstyle\sum_{t \in \mathcal{T}^{\text{xit}}} \varepsilon(\delta_{\widehat{\theta}_t}) \intertext{where (\ref{byasm5}) follows by Assumption \ref{asm:assmpt5} and (\ref{book}) follows by Lemma \ref{lem:prop4.32}. 
     Now, we have $\varepsilon(\delta_{\widehat{\theta}_t}) = C \delta_{\widehat{\theta}_t}$ for a constant $C>0$ by Assumption \ref{asm:assmpt6} and let $\eta(t) = |\{s \in \mathcal{T}^{\text{xit}}: s \leq t \}|$. Then, for $\delta_{\widehat{\theta}_t} = O\Big(\textstyle \sqrt{4 L_{\ell,r}^2 \sigma^2} \log \eta(t) / \sqrt{\eta(t)}\Big)$, we obtain}
    &\leq \textstyle \frac{L_{h,u} \kappa C \sqrt{4 L_{\ell,r}^2 \sigma^2}}{c_u} \sqrt{\# \mathcal{T}^{\text{xit}}} \log \# \mathcal{T}^{\text{xit}} \label{exploitbound}\\
    &\leq \textstyle \frac{L_{h,u} \kappa C \sqrt{4 L_{\ell,r}^2 \sigma^2}}{c_u} \sqrt{T} \log T  \allowdisplaybreaks
\end{align}
To bound the second term in (\ref{bound1}), recall $\mathbb{E}[M_t] \leq \mathcal{M}$. Then, 
\begin{align}
    \textstyle\sum_{t \in \mathcal{T}^{\text{xit}}}  (13,b) &\leq \mathcal{M} \textstyle\sum_{t \in \mathcal{T}^{\text{xit}}} \exp \Big(\frac{- (\delta_{\widehat{\theta}_t} \sqrt{t-1} - c_f(d_x, d_\theta) )^2}{2L_{\ell,r}^2 \sigma^2} \Big) \label{concbound}  \allowdisplaybreaks \\
    &\leq \mathcal{M} \textstyle\sum_{t \in \mathcal{T}^{\text{xit}}} \exp \Big(\frac{- \delta^2_{\widehat{\theta}_t}(t-1)/2 + c^2_f(d_x, d_\theta) }{2L_{\ell,r}^2 \sigma^2} \Big)  \allowdisplaybreaks \\
    &\leq \mathcal{M} \exp \Big(\textstyle \frac{c^2_f(d_x, d_\theta) }{2L_{\ell,r}^2 \sigma^2} \Big) \Big(\textstyle\sum_{t=1}^9 \exp(-(\log t)^2) +\textstyle \sum_{t \in \mathcal{T}^{\text{xit}}, t \geq 10} \exp (- \log t)\Big) \allowdisplaybreaks \\
    &\leq \mathcal{M} \exp \Big(\textstyle \frac{c^2_f(d_x, d_\theta)}{2L_{\ell,r}^2 \sigma^2} \Big) (\mathcal{C} + \log T )
\end{align}
where (\ref{concbound}) follows by Theorem \ref{mle:thm2} and $\mathcal{C}$ can be approximated as 2.2232. Lastly, we bound the first term in (\ref{bound0}): $\mathcal{M}\mathbb{E} [\# \mathcal{T}^{\text{xre}}] =\mathcal{M} \textstyle \sum_{t =0}^T \min \{1, \frac{c}{t}\} \leq \mathcal{M}  \Big(c + \textstyle \sum_{t=c+1}^T \frac{c}{t} \Big) \allowdisplaybreaks \leq \mathcal{M}c (1 - \log (c+1) + \log T)$. Substituting these into (\ref{bound0}):
\begin{align}
    &\textstyle [\sum_{t=0}^T \mathbb{E}[M_t]  | \mathcal{T}^{\text{xit}} ] \leq \mathcal{M} \exp \Big(\frac{c^2_f(d_x, d_\theta) }{2L_{\ell,r}^2 \sigma^2} \Big) (\mathcal{C} + \log T ) + \mathcal{M}c (1 - \log (c+1) + \log T) + \textstyle \frac{L_{h,u} \kappa C \sqrt{4 L_{\ell,r}^2 \sigma^2}}{c_u} \sqrt{T} \log T 
 \end{align}
and taking the expectation gives us the desired result. 
\end{proof}
We analyze regret of $\Lambda^{\epsilon,N}(\mathcal{F}_t')$ by assuming stability of $\Lambda^{O,N}_t(\mathcal{F}_t)$. If the LBMPC from Sect. \ref{sec:problemform} does not provide stability, the full LBMPC formulation \cite{aswani2013provably} can achieve stability. Our results in this paper generalize to the full formulation but at the expense of substantial notational complexity. 
\begin{assm} \label{expstable}
Let $x_{eq} \in \Omega$ be an equilibrium for the LBMPC system in Sect. \ref{sec:problemform}. For $\alpha \in [0, 2/3]$ and $\mathcal{F}_t$ as in (\ref{eqn:ft}), the LBMPC oracle policy $\Lambda^{O,N}_t(\mathcal{F}_t)$ satisfies $\| Ax_t + B\Lambda^{O,N}_t(\mathcal{F}_t) + g(x_t, \Lambda^{O,N}_t(\mathcal{F}_t), \theta_0) - x_{eq} \|
    \leq \alpha \| x_t - x_{eq} \| \ \forall t$. 
\end{assm}
Exponential stability of the nonlinear LBMPC implied by this assumption can be ensured under certain sufficient conditions established in the literature \cite{mayne2000constrained, pannocchia2011conditions}. Generalizing the results with less restrictive stability notions poses future research.
\begin{theorem} 
For $4 \leq c \leq \sqrt[4]{T}/3$, the expected $N$-step dynamic regret $R_{N,T}$ (\ref{dynregret}) for a policy $\Lambda^{\epsilon,N}(\mathcal{F}_t')$ computed by Algorithm \ref{alg:nonmyopic} satisfies 
\begin{multline}
    R_{N,T}\leq \textstyle 2L_{h,x}\sqrt{T} \mathrm{diam}(\mathcal{X}) + \frac{2L_{h,x} c (3 - \alpha)}{1 - \alpha} \mathrm{diam}(\mathcal{X})\log T + \frac{4L_{h,x}\overline{C} c^2}{\alpha} \sqrt{T} (\log T)^3  \\ 
    + \mathcal{M} \exp \Big(\textstyle \frac{c^2_f(d_x, d_\theta) }{2L_{\ell,r}^2 \sigma^2}\Big) (\mathcal{C} + \log T ) + \mathcal{M}c (1 - \log (c+1) + \log T) + \textstyle \frac{L_{h,u} \kappa C \sqrt{4 L_{\ell,r}^2 \sigma^2}}{c_u} \sqrt{T} \log T
\end{multline} with probability at least 
\begin{equation}
    \textstyle 1 - (T-2\sqrt{T})\exp \Big(- \frac{4c^2 \left(\log \frac{e (2\sqrt{T}+2}{c+1}\right)^2}{2 c\log (2\sqrt{T}+1) + \frac{2c^2}{2\sqrt{T}+1} + \frac{4c^2}{3}\log \frac{e (2\sqrt{T}+2)}{c+1}}\Big)  - \exp \Big(- \frac{c^2 \big(\log \frac{T}{2\sqrt{T}+1}\big)^2}{(4+ \frac{2}{3}c^2) \log T}\Big) 
\end{equation}
where $\overline{C} = c_u^{-1}(\|B\| + L_{f,u}) \kappa C \sqrt{4 L^2_{\ell, r} \sigma^2}$. 
\end{theorem}
\begin{proof}
By Assumption \ref{asm:assmpt5} and the upper bound in (\ref{eqn:nstepregretbound}),  
\begin{align}
R_{N,T} &= \textstyle\sum_{t=0}^T h(x_t,\Lambda_t^{O,N}(\mathcal{F}_t),\theta_0) -  h(x'_t,\Lambda_t^{O,N}(\mathcal{F}'_t),\theta_0) + \textstyle\sum_{t=0}^T h(x'_t,\Lambda_t^{O,N}(\mathcal{F}'_t),\theta_0) -  h(x_t',\Lambda_t^{\epsilon,N}(\mathcal{F}_t'),\theta_0) \allowdisplaybreaks  \\
    &\leq L_{h,x} \textstyle\sum_{t=0}^T\|x_t - x_t'\| \ + \  (\ref{eqn:nstepregretbound}) \label{eqn:dyn0} 
\end{align}
Algorithm (\ref{alg:nonmyopic}) performs exploration at random times according to a non-stationary stochastic process over $\mathcal{T}$. We divide $\mathcal{T}$ into \enquote{inter-explore intervals} composed of an exploration and the subsequent exploitations until the next one is reached. Let $I_k = [\underline{I}_k, \overline{I}_k]$ be the $k^{\text{th}}$ sub-interval such that $I_{-1} = [0, 2\lceil \sqrt{T} \rceil]$, $I_0 = [ 2\lceil \sqrt{T} \rceil + 1, t_1^{\text{xre}} - 1 ]$, $I_{k} = [t_k^{\text{xre}}, t_{k+1}^{\text{xre}} - 1]$ for $k \in [1, K-1]$ where $t_k^{\text{xre}}$ is the $k^{\text{th}}$ exploration step after time $2\lceil \sqrt{T} \rceil$, and $I_K = [t_K^{\text{xre}},T]$ where $K = \textstyle \sum_{t = 2\lceil \sqrt{T} \rceil + 1}^T s_t$ and $s_t \sim \mathrm{Bernoulli} \left(\min \big\{1, \nicefrac{c}{t}\big\}\right)$. Then, $\textstyle\sum_{t=0}^{T} \|x_t - x_t'\| = \textstyle \sum_{k = -1}^{K} \sum_{t \in I_k} \|x_t - x_t'\|$. The key idea is that regret over each $I_k, k \in [0, K]$ is bounded above by the regret over $S_k = [\underline{S}_k, \overline{S}_k] = [\underline{I}_k, T]$ that includes a single exploration at time $\underline{I}_k$ followed by exploitation steps thereafter up to $T$. 

Suppose Algorithm \ref{alg:nonmyopic} uses  $\Lambda^{O,N}_t(\mathcal{F}_t)$ at all greedy exploitation steps of $S_k, k \in [0, K]$. Since $x_t \in \mathcal{X}$ for $t \in \mathcal{T}^{\text{xre}}$ and $\mathcal{X}$ is compact, $\|x_t-x_{eq}\| \leq  \mathrm{diam}(\mathcal{X}), \ t \in \mathcal{T}^{\text{xre}}$. Then, by Assumption \ref{expstable},  $\textstyle\sum_{t \in I_k }\|x_t - x_{eq}\| \leq \textstyle\sum_{t \in S_k }\|x_{t} - x_{eq}\| = \textstyle\|x_{t_k^{\text{xre}}} - x_{eq}\| + \textstyle \sum_{t=\underline{S}_k+1}^{\overline{S}_k} \|x_t - x_{eq}\| \leq \textstyle \mathrm{diam}(\mathcal{X}) + \textstyle \sum_{t=\underline{S}_k+1}^{\overline{S}_k} \alpha^{t-\underline{S_k}}  \mathrm{diam}(\mathcal{X}) \leq \textstyle  \mathrm{diam}(\mathcal{X})/ (1 - \alpha)$. Next, suppose instead $\Lambda^{E,N}_t(\mathcal{F}'_t)$ is used at all greedy exploitation steps of $S_k, k \in [0, K]$. Observe the convergence of $\Lambda^{E,N}(\mathcal{F}_t')$: 
\begin{align}
     &\|Ax'_t +  B \Lambda^{E,N}_t(\mathcal{F}'_t) + g(x'_t, \Lambda^{E,N}_t(\mathcal{F}'_t), \theta_0) - x_{eq} \| \nonumber  \allowdisplaybreaks \\
     &\leq \|Ax'_t + B \Lambda^{O,N}_t(\mathcal{F}'_t) + g(x'_t, \Lambda^{O,N}_t(\mathcal{F}'_t), \theta^o) - x_{eq} \| \notag \allowdisplaybreaks\\
     &\quad + \|B \Lambda^{E,N}_t(\mathcal{F}'_t) + g(x'_t, \Lambda^{E,N}_t(\mathcal{F}'_t), \theta_0) - B \Lambda^{O,N}_t(\mathcal{F}'_t) - g(x'_t, \Lambda^{O,N}_t(\mathcal{F}'_t), \theta_0)\| \allowdisplaybreaks \label{eqn:dyn3} \\
     &\leq \alpha \|x'_t - x_{eq} \| + \|B \Lambda^{E,N}_t(\mathcal{F}'_t) + g(x'_t, \Lambda^{E,N}_t(\mathcal{F}'_t), \theta_0)- B \Lambda^{O,N}_t(\mathcal{F}'_t) - g(x'_t, \Lambda^{O,N}_t(\mathcal{F}'_t), \theta_0)\| \allowdisplaybreaks \label{eqn:dyn4} 
\end{align}
where (\ref{eqn:dyn3}) follows by the triangle inequality and (\ref{eqn:dyn4}) follows by Assumption \ref{expstable}. Recall that $\textstyle    \|\Lambda^{E,N}_t(\mathcal{F}'_t) - \Lambda^{O,N}_t(\mathcal{F}'_t)\| \leq \frac{ \kappa C \sqrt{4 L^2_{\ell, r} \sigma^2}}{c_u}\frac{\log \eta(t)}{\sqrt{\eta(t)}}$ as followed from (\ref{byasm5}) to (\ref{exploitbound}). By Assumption \ref{asm:assmpt5}, we have $(\ref{eqn:dyn4}) \textstyle\leq \textstyle\alpha\|x'_t - x_{eq}\| + \overline{C}\frac{\log \eta(t)}{\sqrt{\eta(t)}}$, where $\overline{C} = \frac{(\|B\| + L_{f,u}) \kappa C \sqrt{4 L^2_{\ell, r} \sigma^2}}{c_u}$. Then, for $k \in [0, K]$, 
\begin{align}
 \textstyle \sum_{t \in I_k} \|x'_t - x_{eq}\| \leq \textstyle\sum_{t \in S_k }\|x'_t - x_{eq}\| &= \textstyle\|x'_{t_k^{\text{xre}}} - x_{eq}\| + \textstyle \sum_{t=\underline{S}_k+1}^{\overline{S}_k} \|x'_t - x_{eq}\| \allowdisplaybreaks \\
 &\leq\textstyle \mathrm{diam}(\mathcal{X}) + \textstyle \sum_{t=t_k^{\text{xre}}+1}^{T} \Big[ \alpha^{t-1}  \mathrm{diam}(\mathcal{X}) + \overline{C} \sum_{i=t_k^{\text{xre}}+1}^{t-1} \frac{\alpha^{t-1-i}\log \eta(i)}{\sqrt{\eta(i)}} \Big]  \allowdisplaybreaks \\
 &\leq  \textstyle \frac{2 - \alpha}{1 - \alpha}\mathrm{diam}(\mathcal{X}) + \textstyle \overline{C} \sum_{t=t_k^{\text{xre}}+1}^{T} \sum_{i=t_k^{\text{xre}}+1}^{t-1} \alpha^{t-1-i} \frac{\log \eta(i)}{\sqrt{\eta(i)}}  \allowdisplaybreaks 
 \intertext{Recall $\eta(i) = i - \sum_{j = 1}^{i} s_j$ where $s_j \sim \mathrm{Bernoulli}(\min \{1, \nicefrac{c}{j}\} )$ and $\mathbb{E} \sum_{j = 1}^{i} s_j \leq c + \int_{j = c}^{i} \frac{c}{j}dj = c\log \frac{e i}{c}$. By conditioning on the event 
 $\mathcal{E}_i = \{\sum_{j = 1}^{i} s_j \leq  3\mathbb{E} \sum_{j = 1}^{i} s_j\}$, we get $\eta(i) \geq i -  3\mathbb{E} \sum_{j = 1}^{i} s_j \geq i - 3c \log \frac{e i}{c} \geq \frac{i}{c^2}$ where the last inequality holds for all $i \geq 2\lceil \sqrt{T} \rceil + 1 \geq 6c^2 + 1$. Then, for $\alpha \in [0, 2/3]$, }
  &\leq \textstyle \frac{2 - \alpha}{1 - \alpha} \mathrm{diam}(\mathcal{X}) +  \overline{C} c \sum_{t=t_k^{\text{xre}}+1}^{T} \sum_{i=t_k^{\text{xre}}+1}^{t-1} \alpha^{t-i-1} \frac{\log i}{\sqrt{i}}  
 \allowdisplaybreaks \\
&\leq \textstyle \frac{2 - \alpha}{1 - \alpha} \mathrm{diam}(\mathcal{X}) +  \frac{\overline{C} c}{\alpha} \sum_{t=t_k^{\text{xre}}+1}^{T} \sum_{i=t_k^{\text{xre}}+1}^{t-1} \frac{\log i}{(t-i)\sqrt{i}}   \allowdisplaybreaks \\
&\leq \textstyle \frac{2 - \alpha}{1 - \alpha} \mathrm{diam}(\mathcal{X}) +  \frac{2\overline{C} c}{\alpha} \sum_{t=t_k^{\text{xre}}+1}^{T} \frac{(\log t)^2}{\sqrt{t}}   \allowdisplaybreaks \\
 &\leq \textstyle \frac{2 - \alpha}{1 - \alpha} \mathrm{diam}(\mathcal{X}) +  \frac{2\overline{C} c}{\alpha} \sqrt{T} (\log T)^2 \label{uppepsilon} 
\end{align}
Note $\mathbb{E} \sum_{j = 1}^{i} s_j \geq c \log \frac{e (i+1)}{c+1}$ and $\mathrm{Var}(\sum_{j = 1}^{i} s_j) = \sum_{j = 1}^{i} \frac{c}{j} \cdot \frac{j-c}{j} \leq c\log i + \frac{c^2}{i}$. Then, by Bernstein's inequality \cite{boucheron2013concentration}, (\ref{uppepsilon}) holds with 
\begin{align}
    \mathbb{P}(\cap_{i = t_k^{\text{xre}}+1}^{T} \mathcal{E}_i) \geq \textstyle 1 - \sum_{i = t_k^{\text{xre}}+1}^{T}  P(\overline{\mathcal{E}}_i) &\geq \textstyle 1 - \sum_{i = t_k^{\text{xre}}+1}^{T} \exp \Big(- \frac{4c^2 (\log \frac{e (i+1)}{c+1})^2}{2 c\log i + \frac{2c^2}{i} + \frac{4c^2}{3}\log \frac{e (i+1)}{c+1}}\Big) \allowdisplaybreaks\\
    &\geq \textstyle 1 - (T-2\sqrt{T})\exp \Big(- \frac{4c^2 (\log \frac{e (2\sqrt{T}+2}{c+1})^2}{2 c\log (2\sqrt{T}+1) + \frac{2c^2}{2\sqrt{T}+1} + \frac{4c^2}{3}\log \frac{e (2\sqrt{T}+2)}{c+1}}\Big).
\end{align} The above bounds for $\Lambda^{O,N}(\mathcal{F}_t)$ and (\ref{uppepsilon}) for $\Lambda^{\epsilon,N}(\mathcal{F}_t')$ allow us to bound the deviation of the system trajectory under the learning policy from the one under the oracle policy over $I_{-1}$ as $\textstyle\sum_{t \in I_{-1}} \|x_t - x_t'\| \leq 2\sqrt{T} \mathrm{diam}(\mathcal{X})$ and over $I_k, k \geq 0$ as $\textstyle\sum_{t \in I_k} \|x_t - x_t'\| \leq \textstyle\sum_{t \in S_k} \|x_t - x_{eq}\| + \textstyle\sum_{t \in S_k}  \|x'_t - x_{eq}\| \leq \textstyle \frac{3 - \alpha}{1 - \alpha} \mathrm{diam}(\mathcal{X}) +  \frac{2\overline{C} c}{\alpha} \sqrt{T} (\log T)^2$. Combining this with (\ref{eqn:dyn0}), we obtain $R_{N,T} \leq \textstyle 2L_{h,x}\sqrt{T} \mathrm{diam}(\mathcal{X}) + L_{h,x} K \big(\textstyle \frac{3 - \alpha}{1 - \alpha} \mathrm{diam}(\mathcal{X}) +  \frac{2\overline{C} \sqrt{c}}{\alpha} \sqrt{T} (\log T)^2\big) + (\ref{eqn:nstepregretbound})$, and it remains to bound $K$. Note $\mathbb{E} K \geq  c \log \frac{T+1}{2\sqrt{T} + 1}$, $\mathrm{Var}(K) \leq 2 \log T$, and Bernstein's inequality yields $\mathbb{P} (K \leq 2\mathbb{E}K) \geq 1 - \exp \Big(- \frac{c^2 (\log \frac{T}{2\sqrt{T}+1})^2}{(4+ \frac{2}{3}c^2) \log T}\Big)$. Bounding $K$ by $2\mathbb{E}K \leq 2c \log T$ gives the desired result. 
\end{proof}
This instantaneous bound implies asymptotic $N$-step dynamic regret of order $O(\sqrt{T} (\log T)^3)$ for Algorithm \ref{alg:nonmyopic}.
\section{Numerical Experiments}\label{sec:exper}
We conduct experiments using Python 3.7.4 and Anaconda on a laptop with 2.3 GHz 8-Core Intel Core i9 processor and 16GB DDR4 RAM. We use MOSEK \cite{mosek} for optimization.
We simulate an HVAC system (see Sect. \ref{subsec:hvac}), using a discrete time model from \cite{aswani2011reducing} with 15 minutes sampling interval and dynamics $x_{t+1} = k_r x_{t} - k_c u_{t} + k_v v_t + q_{t}$, where $x_t \in [20, 24]$ in $^\circ C$, $u_t \in [0, 0.5]$ is AC duty cycle, $v_t$ is outside temperature in $^\circ C$, and $q_t$ is heating load due to occupants. We assume $r_t = -c_t \sim \mathcal{N}\left(h(x_t, u_t, \theta_0), \sigma^2\right)$ for $h(x_t, u_t, \theta_0) = \gamma_1 p_t  u_t + (x_t - \gamma_2 - v_t)^2$ where $p_t$ is the electricity price assumed to follow a peak-pricing plan between 12-6 p.m. over an 24 hour day. The  $\gamma_1 p_t  u_t$ accounts for energy use, and $v_t + \gamma_2$ indicates a setpoint preference that adjusts with outside temperature \cite{ASHRAE}. We suppose $\theta_0 = \left[q_t, \gamma_1, \gamma_2\right]$ are unknown to the controller, and use $\sigma = 1, k_r = 0.64, k_c = 2.64, k_v = 0.10$ \cite{aswani2011reducing}. We assume $v_t$ and $q_t$ are generated from a sinusoidal distribution with a single peak over 24 hours and average values of $6.98$ and $17$, respectively.  All metrics are averaged across 1000 replicates. 
\begin{figure}
            \centering
    \adjustbox{trim=0 0.2cm 0 0.2cm}{%
            \includegraphics[width=0.75\textwidth]{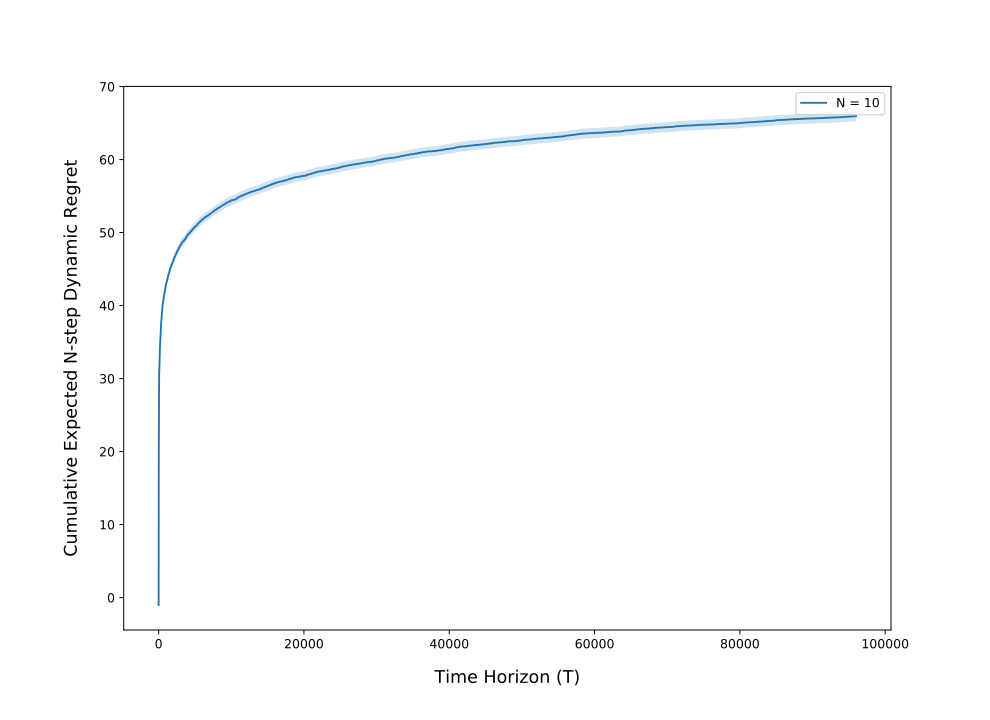}}
            \caption{Expected $10$-step dynamic regret. The shaded region represents the standard error over 1000 replications.}
            \label{fig:N10}
\end{figure}

Fig. \ref{fig:N10} shows regret up to time $T=100,000$ of the $N=10$ policy. These results are compatible with our asymptotic regret bound $O(\sqrt{T} (\log T)^3)$. Fig. \ref{fig:cost} compares cumulative expected costs of the $N=1$ and $N = 10$ policies by subtracting the expected cost of $\Lambda^{\epsilon, 10}_t(\mathcal{F}'_t)$ from that of $\Lambda^{\epsilon, 1}_t(\mathcal{F}'_t)$. Lower costs are obtained with $N=10$.
\begin{figure}
    \centering
    \adjustbox{trim=0 0.2cm 0 0.2cm}{%
\includegraphics[width=0.75\textwidth]{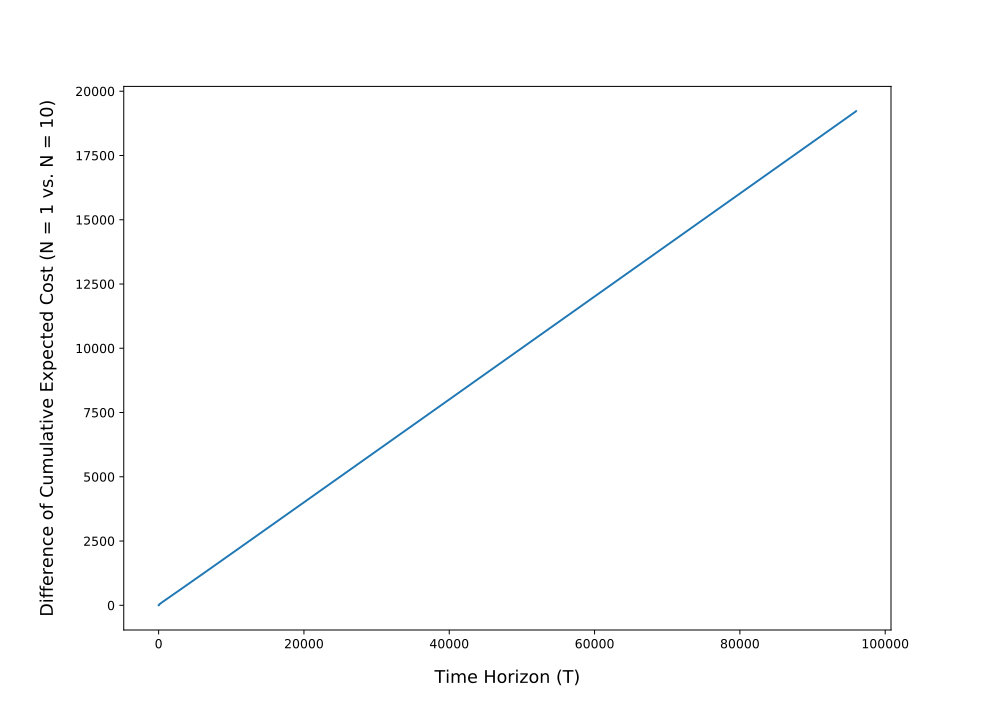}}
    \caption{Difference of cumulative expected costs of the policies for $N = 1$ and $N = 10$.}
    \label{fig:cost}
\end{figure}
\section{Conclusion}\label{conc}
This paper studies the intersection of nonlinear MPC and RL. Stability is one of the unique (and not previously well-studied) issues that arises with RL for nonlinear systems. We develop a new class of LBMPC policies that we prove achieves low regret, which is supported by our numerical experiments.
\bibliography{RegretLBMPC} 

\end{document}